\newtheorem{theorem}{Theorem}
\newtheorem{lemma}[theorem]{Lemma}
\newtheorem{corollary}[theorem]{Corollary}
\newtheorem{proposition}[theorem]{Proposition}
\newtheorem{example}[theorem]{Example}
\newcommand{\conv}{\operatorname{conv}}
\numberwithin{equation}{section}
\begin{document}
	
\begin{frontmatter}
	
	\title{Wasserstein Distance to Independence Models}
	
	\author{T\"urk\"u \"Ozl\"um \c{C}el{\.i}k}
	\address{Simon Fraser University, 8888 University Drive, Burnaby, Canada}
	\ead{turkuozlum@gmail.com}
	
	\author{Asgar Jamneshan}
	\address{UCLA, 520 Portola Plaza, Los Angeles, USA}
	\ead{jasgar@math.ucla.edu}
	
	\author{Guido Mont\'ufar}
	\address{MPI-MiS Leipzig, Inselstr. 22, Leipzig, Germany and UCLA, 520 Portola Plaza, Los Angeles, USA}
	\ead{guido.montufar@mis.mpg.de}
	
	\author{Bernd Sturmfels}
	\address{MPI-MiS Leipzig, Inselstr. 22, Leipzig, Germany and UC Berkeley, 970 Evans Hall, Berkeley, USA}
	\ead{bernd@mis.mpg.de}
	
	\author{Lorenzo Venturello}
	\address{Department of Mathematics, KTH Royal Institute of Technology, Stockholm,
	 Lindstedtsv\"agen 25, Stockholm, Sweden}
		\ead{lorenzo.venturello@hotmail.it}

	\begin{abstract}
		An independence model for discrete random variables is a Segre-Veronese variety in a probability simplex. Any metric on the set of joint states of the random variables induces a Wasserstein metric on the probability simplex. The unit ball of this polyhedral norm is dual to the Lipschitz polytope. Given any data distribution, we seek to minimize its Wasserstein distance to a fixed independence model. The solution to this optimization problem is a piecewise algebraic function of the data. We compute this function explicitly in small instances, we study its combinatorial structure and algebraic degrees in general, and we present some experimental case~studies.
	\end{abstract}
	
	\begin{keyword}
		Algebraic Statistics · Linear Programming 
		· Lipschitz Polytope · Optimal Transport  · 
		Polar Degrees  · 
		Polynomial Optimization  · 
		Segre-Veronese Variety · Wasserstein Distance
	\end{keyword}
\end{frontmatter}	
%
%
%

\section{Introduction}

A probability distribution on the finite set $[n] = \{1,2,\ldots,n\}$ is a point $\nu$ in the simplex $\Delta_{n-1}=\{(\nu_1,\ldots,\nu_n) \in \mathbb{R}^n_{\geq 0}: \sum_{i=1}^n \nu_i=1\}$. 
We metrize this simplex by the {\em Wasserstein distance}. To define this,
we first turn the state space $[n]$ into a  metric space by fixing a symmetric $n\times n$ matrix 
$d=(d_{ij})$ with nonnegative entries. These  satisfy
$d_{ii} =0$ and $d_{ik} \leq d_{ij} + d_{jk}$ for all $i,j,k$.

Given two probability distributions 
$\mu,\nu\in\Delta_{n-1}$, we 
consider the following linear programming problem,
where $x = (x_1,\ldots,x_n)$ denotes the decision variables: 
\begin{equation}
\label{eq:dual2}   
{\rm Maximize} \,\,\,
\sum_{i=1}^n\, (\mu_i-\nu_i) \,x_i \, \,\,\,
\text{subject to}\,\,\,\, |x_i - x_j| \,\leq\, d_{ij} \,\,\,\,
\text{for all} \,\, \,1\leq i < j \leq n.
\end{equation}
The optimal value of \eqref{eq:dual2} is denoted $W_d(\mu,\nu) $ 
and called the {\em Wasserstein distance} between $\mu$ and~$\nu$.
This is a metric on  $\Delta_{n-1}$ induced from the finite metric space $([n],d)$. 
The linear program \eqref{eq:dual2} is known as the 
{\em Kantorovich dual} of the  {\em optimal transport problem}
\cite{BASSETTI20061298,
	villani08}. 
In \cite{macis}, we emphasized the optimal transport perspective, whereas here we prefer the dual formulation \eqref{eq:dual2}.

The feasible region of the linear program \eqref{eq:dual2} is unbounded since it is invariant under
translation by ${\bf 1} = (1,1,\ldots,1)$. Taking the
quotient modulo the line $\mathbb{R} {\bf 1}$, we obtain the compact set
\begin{equation}
\label{eq:polytope}
 P_d \,\, = \,\, \bigl\{\,x \in \mathbb{R}^n /\mathbb{R}{\bf 1} \,\,:\,\,\, |x_i - x_j| \,\leq\, d_{ij} \,\,\,\,
\text{for all} \,\, \,1\leq i < j \leq n \,\bigr\}. 
\end{equation}
This $(n-1)$-dimensional polytope is the
{\em Lipschitz polytope} of the metric space $([n],d)$.
In tropical geometry  \cite{JK, Ngoc}, one refers to
$P_d$ as a {\em polytrope}. It is convex both classically and tropically.

An optimal solution $x^*\in P_d$ to the problem  \eqref{eq:dual2} is 
 an {\em optimal discriminator} for the two probability distributions $\mu$ and $\nu$. 
It satisfies $\, W_d(\mu,\nu) = \langle \mu - \nu, x^* \rangle$.
Its coordinates $x^*_i$ are weights on the state space $[n]$ that tell  $\mu $ and $\nu$  apart. 
Here $\langle \,\cdot\,,\, \cdot\,\rangle$ is the standard inner product on $\mathbb{R}^n$.

\smallskip

In this article, we study the Wasserstein distance from a distribution
$\mu$ to a fixed {\em discrete statistical model}  $\,\mathcal{M} \subset \Delta_{n-1}$. We consider the case where $\mathcal{M}$ is a compact set defined by polynomial constraints on $\nu_1,\ldots,\nu_n$.  
Our task is to solve the following mini-max optimization problem:
\begin{equation}
\label{eq:ourproblem}
  W_d(\mu,\mathcal{M})\quad :=\quad \min_{\nu\in \mathcal{M}} 
  W_d(\mu,\nu) \quad
   = \quad \min_{\nu\in \mathcal{M}}\,\max_{x\in P_d}\, \langle \mu-\nu, x \rangle.
\end{equation}
Computing this quantity means solving a non-convex optimization problem. 
We study this problem and propose solution strategies, using
methods from geometry, algebra and combinatorics.
The analogous problem for the Euclidean metric
was treated in \cite{DHOST} and various subsequent works.

The term independence model in our title refers to a statistical model for $k$ discrete random variables where the state space is the product $[m_1] \times \cdots \times [m_k]$ and the $m_i$ are positive integers. 
The number of states equals $n= m_1 \cdots m_k$. 
The simplex $\Delta_{n-1}$ consists of all tensors $\nu$ of format $m_1 \times \cdots \times m_k$ with nonnegative entries that sum to $1$. 
The {\em independence model} $\mathcal{M}$ is the subset of tensors $\nu$ that have rank one.
These represent joint distributions for $k$ independent discrete random variables. 
Recall that a tensor has {\em rank one} if it can be written as an outer product of vectors of sizes $m_1,\ldots, m_k$.  In algebraic geometry, the model $\mathcal{M}$ is known as the {\em Segre variety}. 
Of particular interest is the case $m_1 = \cdots = m_k = 2$ for which $\mathcal{M}$ is the {\em  $k$-bit independence model}. 

We also consider independence models for symmetric tensors.  Here,
all $k$ random variables share the same marginal distribution, so
the number of states is $n = \binom{m+k-1}{k}$ where
$m := m_1 = \cdots = m_k$.
The model $\mathcal{M}$ of symmetric tensors of rank one is the
{\em Veronese variety}. The definition of independence
by way of rank one tensors generalizes
to many other settings. For instance, one may consider
 partially symmetric tensors, when $\mathcal{M}$ is a 
{\em Segre-Veronese variety} (cf.~\cite[\S 8]{DHOST}).

Let us restate our problem for joint distributions. Given an arbitrary tensor $\mu \in \Delta_{n-1}$, we seek an independent tensor $\nu \in \mathcal{M}$ that is closest to $\mu$ with respect to the Wasserstein distance $W_d$. One natural choice for the underlying metric $d$ is the Hamming distance on  strings in $[m_1] \times \cdots \times [m_k]$.  
We consider various metrics in this paper. While the analysis in Section \ref{sec3} is carried out for general finite metric spaces, we consider three types of metrics relevant in applications for the combinatorial analysis in Section \ref{sec4}, namely the discrete metric, the $L_0$-metric, and the $L_1$-metric.     

Our approach  centers around
the {\em optimal value function} $\,\mu \mapsto W_d(\mu,\mathcal{M})\,$
and the {\em solution function} 
$\mu \mapsto {\rm argmin}_{\nu\in\mathcal{M}} \,W_d(\mu,\nu)$. 
The latter is multivalued since there can be two or more
optimal solutions for  special $\mu$.
The guiding idea is to find algebraic formulas for these functions.
We will demonstrate this in Section~\ref{sec2}  with
explicit results for the two smallest instances, with $k=m=2$ and fixed $d$. This rests on a geometric study in the triangle $\Delta_2$ of symmetric $2 \times 2$ matrices, and in the tetrahedron $\Delta_3$ of all $2 \times 2$ matrices, with nonnegative entries that sum to $1$. 

The optimal value function and the solution function are piecewise algebraic. 
This suggests a division of our problem into two tasks: first
identify all pieces, then find a formula for each piece. 
This will be explained in Section~\ref{sec3} where we review 
basics regarding polyhedral norms and characterize the geometry of the distance function to an algebraic variety under such a norm. 

Both tasks are characterized by a high degree of complexity.
 The first task pertains to {\em combinatorial complexity}. 
 This will be addressed in Section~\ref{sec4} with a  combinatorial study of the Lipschitz polytopes that are
associated with product state spaces like those of independence models. 
The second task pertains to {\em algebraic complexity}. This is our topic in Section~\ref{sec5}. We 
relate the algebraic degrees of the optimal value function to polar classes of the underlying model. We discuss 
and apply the formulas derived by \cite{Luca} for polar classes of  Segre-Veronese varieties.

Many optimization problems arising in the mathematics of data
involve both discrete and continuous structures.
In our view, it is important to separate these two, in order to clearly
understand the different mathematical features that arise.
In a setting like the one studied here, it is natural to  separate the combinatorial
complexity and the algebraic complexity of an optimization problem.
The former arises from the exponentially  many combinatorial  patterns, here  the
faces of a polytope,  one might see in a solution. The latter refers to the problem of solving
a system of polynomial equations, and the 
algebraic degree that is intrinsically associated with that task.

Consider the problem of minimizing the $L_\infty$-distance 
from a data point in $3$-space to a general cubic surface.
The optimal point on the surface is tangent to an $L_\infty$-ball
around the data point.
Each $L_\infty$-ball is a cube, just like in Figure \ref{fig:proof_2.2}.
 This tangency occurs
at either a vertex or an edge or a facet.
Thus the combinatorial complexity is given by the face numbers, $f = (8,12,6)$.
Every face determines  a system of polynomial equations
in three unknowns that the  optimal point satisfies.
The algebraic complexity is the expected number of complex solutions.
These numbers are the polar degrees, given by the vector
$\delta = (3,6,12)$ for cubic surfaces.
In Sections \ref{sec4} and \ref{sec5}, we compute the
vectors $f$ and $\delta$ for Wasserstein distance to the independence models.
Section~\ref{sec6} features numerical experiments. We solve our optimization problem
for a range of  instances using the software {\tt SCIP} \cite{SCIP}, and we discuss the
geometric insights that were learned.

\section{Explicit Formulas}
\label{sec2}

In this section, 
we solve
our problem for two binary random variables. We begin with the case of a  binomial distribution, namely the sum of two independent and identically distributed binary random variables. 
The model $\mathcal{M}$ is a quadratic curve in the probability triangle $\Delta_2$, known
among statisticians and biologists as the {\em Hardy-Weinberg curve}. This curve is the image of the~map
\begin{equation}
\label{eq:hardyweinberg}	\varphi\,:\,[0,1]\to\,\Delta_2\,, \quad
p\,\mapsto \, \bigl( \,p^2,\,2p(1-p),\,(1-p)^2 \,\bigr). 
\end{equation}	
Thus, $\mathcal{M}$ is the set of  nonnegative symmetric rank one matrices 
$\begin{pmatrix} \nu_1 \!\! &\! \tfrac12\nu_2 \\ \tfrac12\nu_2 \!\! & \!  \nu_3 \end{pmatrix} $
with $\nu_1+\nu_2 +\nu_3 = 1$.

Our second ingredient is the choice of a metric
$ d= (d_{12},d_{13},d_{23})$ on the state space $[3] = \{1,2,3\}$.
There are two natural choices: the {\em discrete metric}
$ d = (1,1,1)$ and the {\em $L_1$-metric} $d = (1,2,1)$. 
Their 
corresponding balls are 
illustrated in Figure~\ref{fig:wassersteinball}.
 Their optimal value functions agree,
so Theorem~\ref{thm: Hardy-Weinberg} is valid for both metrics. 
This holds only in such a small example. 
For larger independence models on symmetric tensors, these two metrics will lead to different solutions. 
	
\begin{figure}[h]
	\centering
	\includegraphics[scale=0.6]{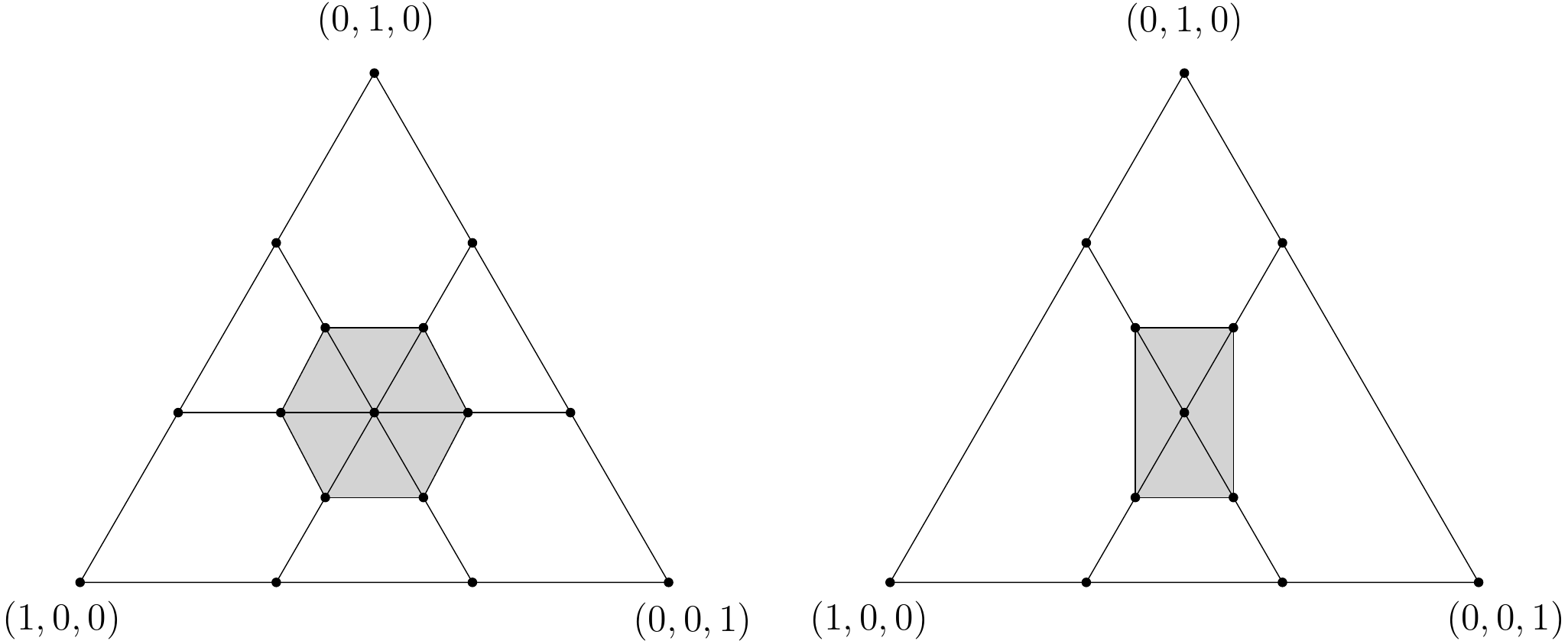}
	\caption{The Wasserstein balls of radius $\frac{1}{6}$ centered in the uniform distribution $(\frac{1}{3},\frac{1}{3},\frac{1}{3})$ associated to the discrete metric (left) and 
	the $L_1$-metric (right) for $n=3$. }
\label{fig:wassersteinball}	
\end{figure}

We now present the optimal value function and the solution function for the model in \eqref{eq:hardyweinberg}. 
These two functions are piecewise algebraic. 
The five pieces are shown in Figure~\ref{fig:fivepieces}. 
On four of them,~the solution function is algebraic of degree two. 
The formula involves a square root in the data~distribution. 
On the fifth piece, the solution function is constant and the optimal value function is linear. 

\begin{theorem}\label{thm: Hardy-Weinberg}
For the discrete metric and for the $L_1$-metric on the state space $[3] = \{1,2,3\}$,
 the Wasserstein distance from a data distribution $\mu \in \Delta_2$ to the 
Hardy-Weinberg curve $\mathcal{M}$ equals
	\[W_d(\mu,\mathcal{M}) \,= \, \begin{cases}
|2\sqrt{\mu_1}-2\mu_1-\mu_2| &\text{if }\quad\mu_1-\mu_3\geq 0 \text{ and } \mu_1\geq \frac{1}{4} ,\\
|2\sqrt{\mu_3}-2\mu_3-\mu_2| &\text{if }\quad \mu_1-\mu_3\leq 0 \text{ and } \mu_3\geq \frac{1}{4} ,\\
	\mu_2-\frac{1}{2} & \text{if }\quad \mu_1\leq \frac{1}{4} \text{ and } \mu_3\leq \frac{1}{4} .
	\end{cases}		
	\]
	The solution function
	 $ \,\Delta_2 \rightarrow \mathcal{M},\, \mu \mapsto \nu^*(\mu)\,$ 
	 is given (with the same case distinction) by 
	\[\nu^*(\mu)\,=\,\begin{cases}
	(\mu_1,2\sqrt{\mu_1}-2\mu_1, 1+\mu_1-2\sqrt{\mu_1}) , \\
	(1+\mu_3-2\sqrt{\mu_3},2\sqrt{\mu_3}-2\mu_3,\mu_3) ,\\
	(\frac{1}{4},\frac{1}{2},\frac{1}{4}) .
	\end{cases} 	
	\]	
	\end{theorem}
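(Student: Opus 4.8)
The plan is to collapse the minimisation over $\mathcal M=\varphi([0,1])$ into a one‑parameter problem, to evaluate both Wasserstein distances in closed form from the dual program \eqref{eq:dual2}, to check that the two metrics give the same answer, and then to minimise an explicit piecewise‑quadratic function of $p$.

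First I would record the two closed forms. For the discrete metric the Lipschitz polytope $P_d$ is the hexagon whose six vertices are the classes of the $0/1$‑vectors in $\mathbb R^3$ with one or two entries equal to $1$, so its support function at $z=\mu-\nu$ is $\max\{|z_1|,|z_2|,|z_3|\}$. For the $L_1$‑metric $P_d$ is the square $\{x:\ |x_1-x_2|\le 1,\ |x_2-x_3|\le 1\}$, the bound $|x_1-x_3|\le 2$ being redundant, with support function $|z_1|+|z_3|$. Put $A(p):=\mu_1-p^2$ and $C(p):=\mu_3-(1-p)^2$, so that $\mu_2-2p(1-p)=-(A+C)$ because the three coordinate differences sum to $0$. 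Writing $g(p)$ and $f(p)$ for $W_d(\mu,\varphi(p))$ under the discrete and the $L_1$‑metric respectively, this gives
\[
g(p)=\max\{\,|A(p)|,\ |A(p)+C(p)|,\ |C(p)|\,\},\qquad f(p)=|A(p)|+|C(p)|,
\]
so $g\le f$ pointwise, with equality whenever $A(p),C(p)$ have the same sign or one of them vanishes; by compactness $W_d(\mu,\mathcal M)=\min_{p\in[0,1]}W_d(\mu,\varphi(p))$ for each of the two metrics.

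Next I would show $\min_p g=\min_p f$, so that the two metrics yield the same optimal value. One inequality is immediate. For the other, let $\bar p$ minimise $g$. If $A(\bar p)$ and $C(\bar p)$ are nonzero and of opposite sign, then this very hypothesis forces the zeros $\sqrt{\mu_1}$ of $A$ and $1-\sqrt{\mu_3}$ of $C$ to lie on the same side of $\bar p$; hence nudging $\bar p$ slightly toward $\sqrt{\mu_1}$ strictly decreases both $|A(\bar p)|$ and $|C(\bar p)|$, and therefore strictly decreases $g=\max\{|A|,|A+C|,|C|\}$ — contradicting the minimality of $\bar p$. Thus $g(\bar p)=f(\bar p)\ge\min_p f$. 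It then remains to minimise the single function $f$ on $[0,1]$, whose minimiser will be a point where $g=f$, so that the resulting value and argmin serve both metrics at once.

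Finally I would minimise $f(p)=|\mu_1-p^2|+|\mu_3-(1-p)^2|$ on $[0,1]$. It is quadratic on each of the at most three subintervals cut out by its non‑smooth points $\sqrt{\mu_1}$ and $1-\sqrt{\mu_3}$, the only interior stationary point of a piece is $p=\tfrac12$, and the endpoints are easily discarded; so the minimum is attained among $\{\tfrac12,\sqrt{\mu_1},1-\sqrt{\mu_3}\}$. By the state‑swap symmetry $(\mu_1,\mu_2,\mu_3)\mapsto(\mu_3,\mu_2,\mu_1)$, $p\mapsto 1-p$, which preserves $\mathcal M$ and both metrics, I may assume $\mu_1\ge\mu_3$. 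If $\mu_1\le\tfrac14$ then also $\mu_3\le\tfrac14$, the point $\tfrac12$ lies in $[\sqrt{\mu_1},1-\sqrt{\mu_3}]$ where $f(p)=(p^2-\mu_1)+((1-p)^2-\mu_3)$ is convex, so $p^*=\tfrac12$, $\nu^*=\varphi(\tfrac12)=(\tfrac14,\tfrac12,\tfrac14)$ and $W_d(\mu,\mathcal M)=f(\tfrac12)=\mu_2-\tfrac12$. If $\mu_1\ge\tfrac14$, then $f(\sqrt{\mu_1})=|2\sqrt{\mu_1}-2\mu_1-\mu_2|$ and the only rival is $f(1-\sqrt{\mu_3})=|2\sqrt{\mu_3}-2\mu_3-\mu_2|$; the comparison rests on
\[
\bigl(2\sqrt{\mu_1}-2\mu_1-\mu_2\bigr)^2-\bigl(2\sqrt{\mu_3}-2\mu_3-\mu_2\bigr)^2
=-4\bigl(\sqrt{\mu_1}-\sqrt{\mu_3}\bigr)\bigl(1-\sqrt{\mu_1}-\sqrt{\mu_3}\bigr)^2\ \le\ 0,
\]
so $p^*=\sqrt{\mu_1}$, whence $\nu^*=\varphi(\sqrt{\mu_1})=(\mu_1,\,2\sqrt{\mu_1}-2\mu_1,\,1+\mu_1-2\sqrt{\mu_1})$ and $W_d(\mu,\mathcal M)=|2\sqrt{\mu_1}-2\mu_1-\mu_2|$; the sign of $2\sqrt{\mu_1}-2\mu_1-\mu_2$ (equivalently, whether $\sqrt{\mu_1}+\sqrt{\mu_3}$ exceeds $1$) is what further subdivides this region into two of the five pieces. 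The case $\mu_1\le\mu_3$ follows by the symmetry. I expect the main obstacle to be establishing that the two metrics coincide — the nudging argument above — together with the bookkeeping of which stationary point is globally optimal as $\mu$ ranges over $\Delta_2$; once $g$ has been replaced by $f$, the remaining optimisation is elementary.
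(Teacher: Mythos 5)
Your argument is correct, but it takes a genuinely different route from the paper. The paper omits a proof of this theorem, remarking only that it is analogous to its proof of Theorem~\ref{thm: 3bits}: there one works face by face with the Wasserstein unit ball $B=P_d^*$, solves a Lagrange-multiplier problem for one face per symmetry class, checks that the critical point lies in the cone $\mu+C_F$, and compares the finitely many local minima. You instead pull everything back to the curve parameter $p$: with $A=\mu_1-p^2$, $C=\mu_3-(1-p)^2$ you identify the two Wasserstein distances as $g=\max\{|A|,|A+C|,|C|\}$ and $f=|A|+|C|$ (both support-function computations are correct), prove $\min_p g=\min_p f$ by the nudging argument, and then minimize the piecewise-quadratic $f$ over the candidates $\sqrt{\mu_1}$, $1-\sqrt{\mu_3}$, $\tfrac12$; the key identity $(2\sqrt{\mu_1}-2\mu_1-\mu_2)^2-(2\sqrt{\mu_3}-2\mu_3-\mu_2)^2=-4(\sqrt{\mu_1}-\sqrt{\mu_3})(1-\sqrt{\mu_1}-\sqrt{\mu_3})^2$ checks out (both quantities factor as $(\sqrt{\mu_1}+\sqrt{\mu_3}-1)(1\mp\sqrt{\mu_1}\pm\sqrt{\mu_3})$), and the resulting values and minimizers match the stated formulas, with the symmetry $\mu_1\leftrightarrow\mu_3$, $p\mapsto 1-p$ covering the second case. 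What your route buys is elementarity and self-containedness: no face enumeration, no Lagrange multipliers, and it actually proves the paper's unproved assertion that the two metrics share the optimal value and an optimal solution. What the paper's route buys is generality: the Section~\ref{sec3} framework extends to higher-dimensional models and feeds the combinatorial (faces/types) and algebraic (polar degree) analysis, whereas your reduction exploits the one-dimensionality of $\mathcal{M}$. Two small points to tighten: in the nudging step, note that when $A,C$ have opposite signs one has $|A+C|=\bigl||A|-|C|\bigr|\le\max\{|A|,|C|\}$, and this persists under a small nudge, so the strict decrease of $|A|$ and $|C|$ really does decrease $g$ (the $|A+C|$ term cannot take over); and ``the endpoints are easily discarded'' needs the caveat that for $\mu_1=0$ or $\mu_3=0$ an endpoint coincides with a kink, so it remains in your candidate set and the conclusion is unaffected.
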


\vspace{-1mm}
\begin{figure}[h!]
	\centering	
	\includegraphics[scale=0.65]{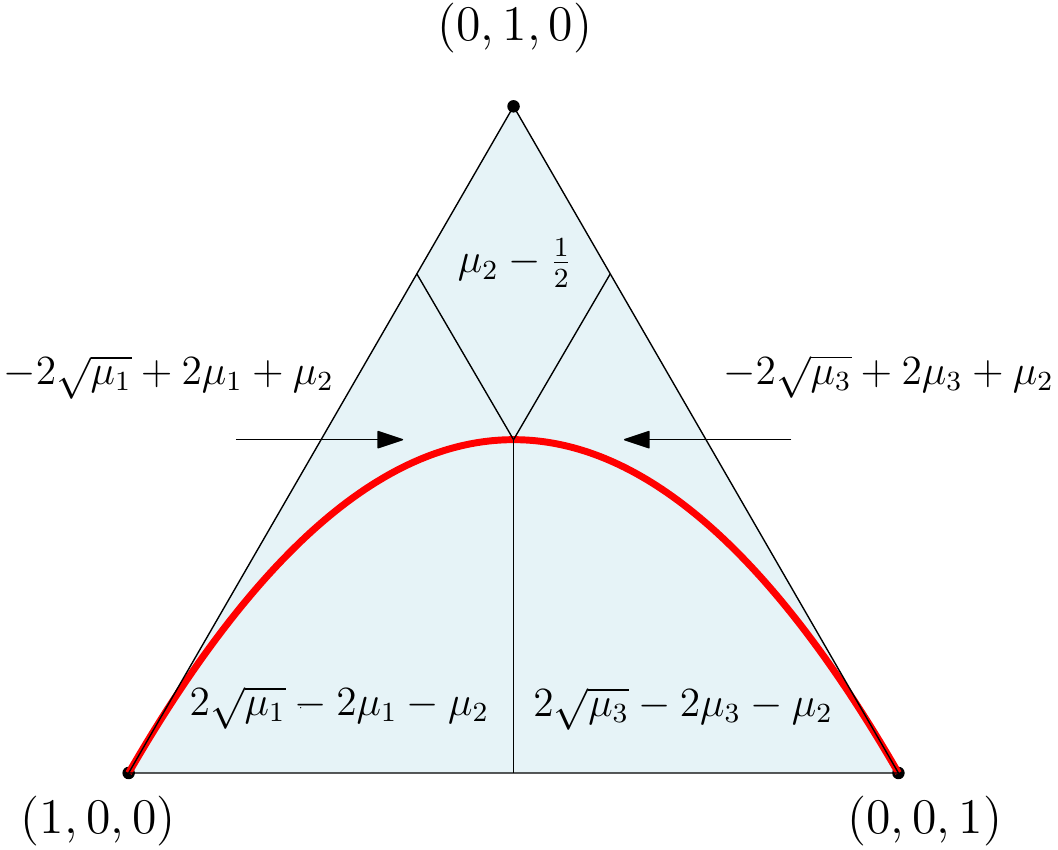}
	\caption{The Hardy-Weinberg curve $\mathcal{M}$ is shown in red.
	The optimal value function for the Wasserstein distance to this curve is piecewise algebraic with five regions.}
	\label{fig:fivepieces}	
\end{figure}

Theorem \ref{thm: Hardy-Weinberg}
involves a distinction into three cases.
Each of the first two cases gives two algebraic pieces of the optimal value function. 
We point out three interesting features.
First, there is a full-dimensional region in $\Delta_2$,
namely the top parallelogram in Figure \ref{fig:fivepieces},
all of whose points $\mu$ share the same optimal solution $\nu^*(\mu)=(\frac{1}{4},\frac{1}{2},\frac{1}{4})$ in $\mathcal{M}$. 
Second, all points $\mu$ on the vertical line segment $\{\mu: \mu_1=\mu_3, \mu_2< 1/2\}$ have two distinct optimal solutions, namely the intersection points of the curve $\mathcal{M}$ with a horizontal line. The identification of such \emph{walls of indecision} is important for finding accurate numerical solutions. 
Third, the optimal value and solution functions agree for the two metrics in Figure~\ref{fig:wassersteinball}. However, one can perturb the discrete metric to observe a difference. This is illustrated in Figure \ref{fig:lorenzonew}. 
The point $\mu = (\frac{1}{2},0,\frac{1}{2})$ has
two closest points in the $L_1$-metric but four
closest points in the Wasserstein distance induced by  $d=(d_{12},d_{13},d_{23})=(1,1-\epsilon,1)$ for some $\epsilon>0$. 

\begin{figure}[h]
	\centering
	\includegraphics[scale=0.55]{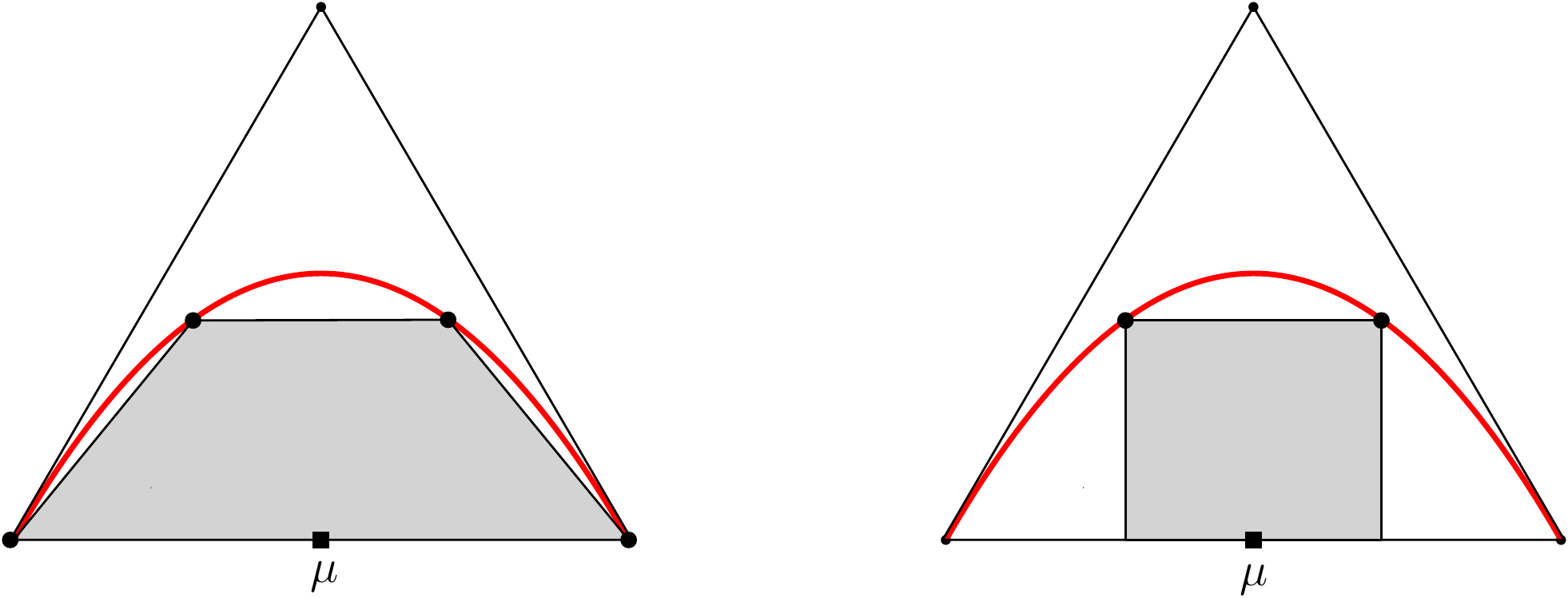} \vspace{-0.1in}
	\caption{The Wasserstein balls around a data point touch the curve in either 
	four or two points. The metrics on $[3]$ are $d=(1,1-\epsilon,1)$ and
	$d=(1,2,1)$ respectively.}
\label{fig:lorenzonew}	
\end{figure}

Next, we increase the dimension by one. Consider
the tetrahedron $\Delta_3$ whose points are joint probability
distributions of two binary random variables $(n=4,k=2)$.
The {\em $2$-bit independence model} $\mathcal{M}\subset \Delta_3$ consists of all nonnegative $2 \times 2$ matrices of rank one whose entries sum to one: 
\begin{equation}
\label{eq:model}
 \qquad \begin{pmatrix} \nu_1 & \nu_2 \\ \nu_3 & \nu_4  \end{pmatrix}
\, = \,
\begin{pmatrix} pq & p(1-q) \\ (1{-}p)q & (1{-}p)(1{-}q) \end{pmatrix} \! , \qquad
\,  (p,q)\in[0,1]^2. 
\end{equation}
Thus, $\mathcal{M}$ is the  surface in the tetrahedron $\Delta_3$
 defined by the equation $\nu_1 \nu_4 = \nu_2 \nu_3 $.
 We fix the $L_0$-metric $d$ on the set of binary pairs $[2]\times[2]$. 
 Under our identification (lexicographic order) of this state space with $[4] = \{1,2,3,4\}$, the resulting metric on $\Delta_3$ is given by the $4 \times 4$~matrix 
   \begin{equation}
\label{eq:metric}
d\,\,=\,\,
\begin{small}
\begin{pmatrix}
\,0 & 1 & 1 & 2    \\
\,1 & 0 & 2 & 1    \\
\,1 & 2 & 0 & 1   \\
\,2 & 1 & 1 & 0  \\
\end{pmatrix}. 
\end{small}
\end{equation}

We now present the optimal value function
and solution function for this independence~model.

\begin{theorem}\label{thm: 3bits}
For the $L_0$-metric on the state space $[2]\times[2]$, the Wasserstein distance from a data 
distribution $\mu \in \Delta_3$ to the $2$-bit independence surface $\mathcal{M}$ is given by
	\[W_d(\mu,\mathcal{M})\,\,=\,\,\begin{cases}
	2\sqrt{\mu_1}(1-\sqrt{\mu_1})-\mu_2-\mu_3 &\text{if } \mu_1 \geq \mu_4\,, \,\,
	\sqrt{\mu_1} \geq \mu_1+\mu_2\,,\,\,
		\sqrt{\mu_1} \geq \mu_1+ \mu_3, \smallskip \\
	2\sqrt{\mu_2}(1-\sqrt{\mu_2})-\mu_1-\mu_4 &\text{if } \mu_2 \geq \mu_3\,, \,\,
	\sqrt{\mu_2} \geq \mu_1+\mu_2\,,\,\, \sqrt{\mu_2} \geq \mu_2+\mu_4, \smallskip \\
	2\sqrt{\mu_3}(1-\sqrt{\mu_3})-\mu_1-\mu_4 &\text{if } \mu_3 \geq \mu_2\,, \,\,
	\sqrt{\mu_3} \geq \mu_1+\mu_3\,,\,\, \sqrt{\mu_3} \geq \mu_3+\mu_4, \smallskip \\
	2\sqrt{\mu_4}(1-\sqrt{\mu_4})-\mu_2-\mu_3 &\text{if } \mu_4 \geq \mu_1\,,\,\,
	 \sqrt{\mu_4} \geq \mu_2+\mu_4 \,,\,\, \sqrt{\mu_4} \geq \mu_3+\mu_4, \smallskip \\
	|\mu_1\mu_4-\mu_2\mu_3|/(\mu_1+\mu_2)  &\text{if } \mu_1 \geq \mu_4 , \,
	 \mu_2\geq \mu_3 ,  \, 	\mu_1{+}\mu_2 \geq \sqrt{\mu_1},\,
\mu_1{+}\mu_2 \geq\sqrt{\mu_2}, \smallskip \\
	|\mu_1\mu_4-\mu_2\mu_3|/(\mu_1+\mu_3) &\text{if } \mu_1 \geq \mu_4,  \,
	\mu_3\geq \mu_2, \,\mu_1 {+} \mu_3 \geq	\sqrt{\mu_1},\,
	\mu_1{+}\mu_3	\geq \sqrt{\mu_3},	\smallskip	\\
	|\mu_1\mu_4-\mu_2\mu_3|/(\mu_2+\mu_4) &\text{if } \mu_4 \geq \mu_1, \,
	 \mu_2\geq \mu_3, \, \mu_2{+}\mu_4 \geq \sqrt{\mu_4},\,
	\mu_2{+}\mu_4 \geq  \sqrt{\mu_2}	, \smallskip \\
	|\mu_1\mu_4-\mu_2\mu_3|/(\mu_3+\mu_4) &\text{if }\mu_4 \geq \mu_1, \,
	 \mu_3\geq \mu_2, \,
	\mu_3{+}\mu_4 \geq   \sqrt{\mu_4},\,
	\mu_3{+}\mu_4 \geq  \sqrt{\mu_3}.
	\end{cases}	
	\]
	The solution function
	 $ \,\Delta_3 \rightarrow \mathcal{M},\, \mu \mapsto \nu^*(\mu)\,$ 
	 is given (with the same case distinction) by 
	\[\nu^*(\mu)\,\,= \,\,\begin{cases}
	\bigl( \,\mu_1\, , \,\,\sqrt{\mu_1}-\mu_1\, , \,\,\sqrt{\mu_1}-\mu_1\, , \,\,
	-2\sqrt{\mu_1}+\mu_1+1	\, \bigr) , \\
	\bigl( \,\sqrt{\mu_2}-\mu_2\, , \,\,\mu_2\, , \,\,-2\sqrt{\mu_2}+\mu_2+1\, , \,\,
	\sqrt{\mu_2}-\mu_2 \, \bigr), \\
	\bigl( \, \sqrt{\mu_3}-\mu_3 \, ,\,\,-2\sqrt{\mu_3}+\mu_3+1\, ,\,\,
	\mu_3\, ,\,\,\sqrt{\mu_3}-\mu_3 \, \bigr) ,\\
	\bigl( \,-2\sqrt{\mu_4}+\mu_4+1\, ,\,\,\sqrt{\mu_4}-\mu_4\, ,\,\,
	\sqrt{\mu_4}-\mu_4\, ,\,\,\mu_4 \, \bigr), \\
\bigl( \,\mu_1\, ,\,\,\mu_2\, ,\,\, \mu_1(\mu_3{+}\mu_4)/(\mu_1{+}\mu_2)\, ,\,\, 
\mu_2(\mu_3{+}\mu_4)/(\mu_1{+}\mu_2)\, \bigr),  \\
\bigl( \,\mu_1\, ,\,\, \mu_1(\mu_2{+}\mu_4)/(\mu_1{+}\mu_3)\, ,\,\,
\mu_3 \, ,\,\, \mu_3(\mu_2{+}\mu_4)/(\mu_1{+}\mu_3) \, \bigr),
 \\
\bigl( \, \mu_2(\mu_1{+}\mu_3)/(\mu_2{+}\mu_4)\, ,\,\,
\mu_2\, ,\,\, 
\mu_4(\mu_1{+}\mu_3)/(\mu_2{+}\mu_4)\,, \,\,\mu_4 \, \bigr),
  \\
\bigl( \, 
 \mu_3(\mu_1{+}\mu_2)/(\mu_3{+}\mu_4)\, ,\,\,
 \mu_4(\mu_1{+}\mu_2)/(\mu_3{+}\mu_4)\, ,\,\,\mu_3\, ,\,\,\mu_4 
 \, \bigr). 
	\end{cases} 
	\]
	The walls of indecision are the surfaces
$\{\mu \in \Delta_3: \mu_1-\mu_4=0, \mu_1+\mu_2 \geq \sqrt{\mu_1}, \mu_1+\mu_3 \geq \sqrt{\mu_1}\}$ 
and $\{\mu \in \Delta_3: \mu_2-\mu_3=0, \mu_1+\mu_2 \geq \sqrt{\mu_2}, \mu_2+\mu_4 \geq \sqrt{\mu_2}\}$.
\end{theorem}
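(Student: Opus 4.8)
The plan is to solve the mini-max problem \eqref{eq:ourproblem} directly, exploiting the fact that the inner maximization over the Lipschitz polytope $P_d$ of the $L_0$-metric \eqref{eq:metric} is a linear program whose optimal value is a piecewise-linear function of $\mu-\nu$. First I would determine $P_d$ explicitly: it is a three-dimensional polytope in $\mathbb{R}^4/\mathbb{R}\mathbf{1}$ cut out by $|x_i-x_j|\le d_{ij}$, and I would list its vertices. Because $W_d(\mu,\nu)=\max_{x\in P_d}\langle\mu-\nu,x\rangle$, the value is attained at a vertex of $P_d$, so $W_d(\mu,\nu)=\max_v \langle\mu-\nu,v\rangle$ over the finitely many vertices $v$. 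This turns the problem into $\min_{\nu\in\mathcal{M}}\max_v\langle\mu-\nu,v\rangle$, a minimization of a convex piecewise-linear function over the rank-one surface $\mathcal{M}=\{\nu_1\nu_4=\nu_2\nu_3\}\cap\Delta_3$.

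Next I would stratify $\Delta_3$ according to which vertex $v$ of $P_d$ is active, i.e.\ partition the simplex into the normal cones so that on each piece $W_d(\mu,\nu)=\langle\mu-\nu,v\rangle$ for a single fixed $v$ once $\nu$ is near its optimum. On such a region the problem becomes $\min_{\nu\in\mathcal{M}}\langle\mu-\nu,v\rangle=\langle\mu,v\rangle-\max_{\nu\in\mathcal{M}}\langle\nu,v\rangle$, which is a \emph{linear} objective over $\mathcal{M}$; alternatively, when the optimal $\nu$ lies where two facets of a Wasserstein ball meet the surface, the Lagrange/KKT conditions for minimizing $\langle\mu-\nu,v\rangle$ subject to $\nu_1\nu_4=\nu_2\nu_3$ and $\sum\nu_i=1$ give a small polynomial system. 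Solving it produces the candidate solutions: the four ``square-root'' branches come from the facets where the optimal $\nu$ has one dominant coordinate $\nu_i=\mu_i$ and the complementary rank-one completion forces $\sqrt{\mu_i}$; the four ``rational'' branches $|\mu_1\mu_4-\mu_2\mu_3|/(\mu_i+\mu_j)$ come from edges of the Wasserstein ball, where two coordinates of $\nu$ stay equal to the corresponding $\mu$'s and the rank-one condition determines the other two linearly. For each of the eight candidate formulas I would verify (i) that the proposed $\nu^*(\mu)$ lies in $\mathcal{M}\cap\Delta_3$ on the stated region, (ii) that $W_d(\mu,\nu^*(\mu))$ equals the stated expression by evaluating $\langle\mu-\nu^*,x^*\rangle$ at the corresponding optimal discriminator $x^*\in P_d$, and (iii) optimality, either via the KKT conditions or by exhibiting a dual certificate. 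Checking that the eight regions, together with their inequality descriptions, cover $\Delta_3$ and overlap only on lower-dimensional sets, and that the two families agree on the common boundary $\mu_i+\mu_j=\sqrt{\mu_i}$, completes the case analysis; the walls of indecision are then read off as the loci where two symmetric branches (related by the $\mathbb{Z}/2\times\mathbb{Z}/2$ symmetry of \eqref{eq:metric}) give the same value but distinct $\nu^*$, namely $\mu_1=\mu_4$ or $\mu_2=\mu_3$.

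I would organize the bookkeeping using the symmetry group of the metric \eqref{eq:metric}: swapping the two bits, and swapping the two values of either bit, acts on $[4]$ and permutes both $\mathcal{M}$ and $P_d$, so the eight pieces fall into two orbits (the four square-root pieces and the four rational pieces), and it suffices to analyze one representative of each and then transport by symmetry. This cuts the work roughly by a factor of four.

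The main obstacle I expect is the global assembly rather than any single local computation: one must be sure that the piecewise description is \emph{exhaustive and consistent}, i.e.\ that for every $\mu\in\Delta_3$ at least one of the eight inequality systems holds, that on overlaps the formulas agree, and that in each region the claimed $\nu^*$ is genuinely the global minimizer and not merely a critical point — since $\mathcal{M}$ is non-convex, there can be several KKT points and one must rule out the spurious ones. Establishing the full covering of $\Delta_3$ by the eight semialgebraic regions, and checking the matching along all shared boundaries (including the interplay between the square-root branches and the rational branches at $\mu_i+\mu_j=\sqrt{\mu_i}$), is the delicate part; the individual optimizations, being two-variable problems after eliminating the constraints, are routine by comparison.
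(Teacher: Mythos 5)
Your proposal is essentially the paper's proof: both enumerate the possible combinatorial types of tangency between the polyhedral Wasserstein ball and the surface $\nu_1\nu_4=\nu_2\nu_3$ (you phrase this dually, via the active faces of $P_d$ rather than the faces of $B=P_d^*$), solve the resulting Lagrange systems one representative per orbit of the order-eight symmetry group, and finish by global comparison of the candidate values, reading off the walls of indecision where tied branches have distinct minimizers. The only slip is your attribution of branches to strata: facet-type tangencies of the ball (a single active vertex of $P_d$, your purely linear subproblem) in fact contribute no local minima at all, the square-root branches arise from edge-type tangencies (one coordinate $\nu_i=\mu_i$ pinned), and the rational branches from vertex-type tangencies (two coordinates pinned) --- a bookkeeping error that your own verification steps (i)--(iii) would catch.
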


\begin{figure}[h]
	\centering	
	\vspace{-0.15in}
	\includegraphics[scale=0.8, trim={6.5cm 2cm 6.8cm 2cm},clip]{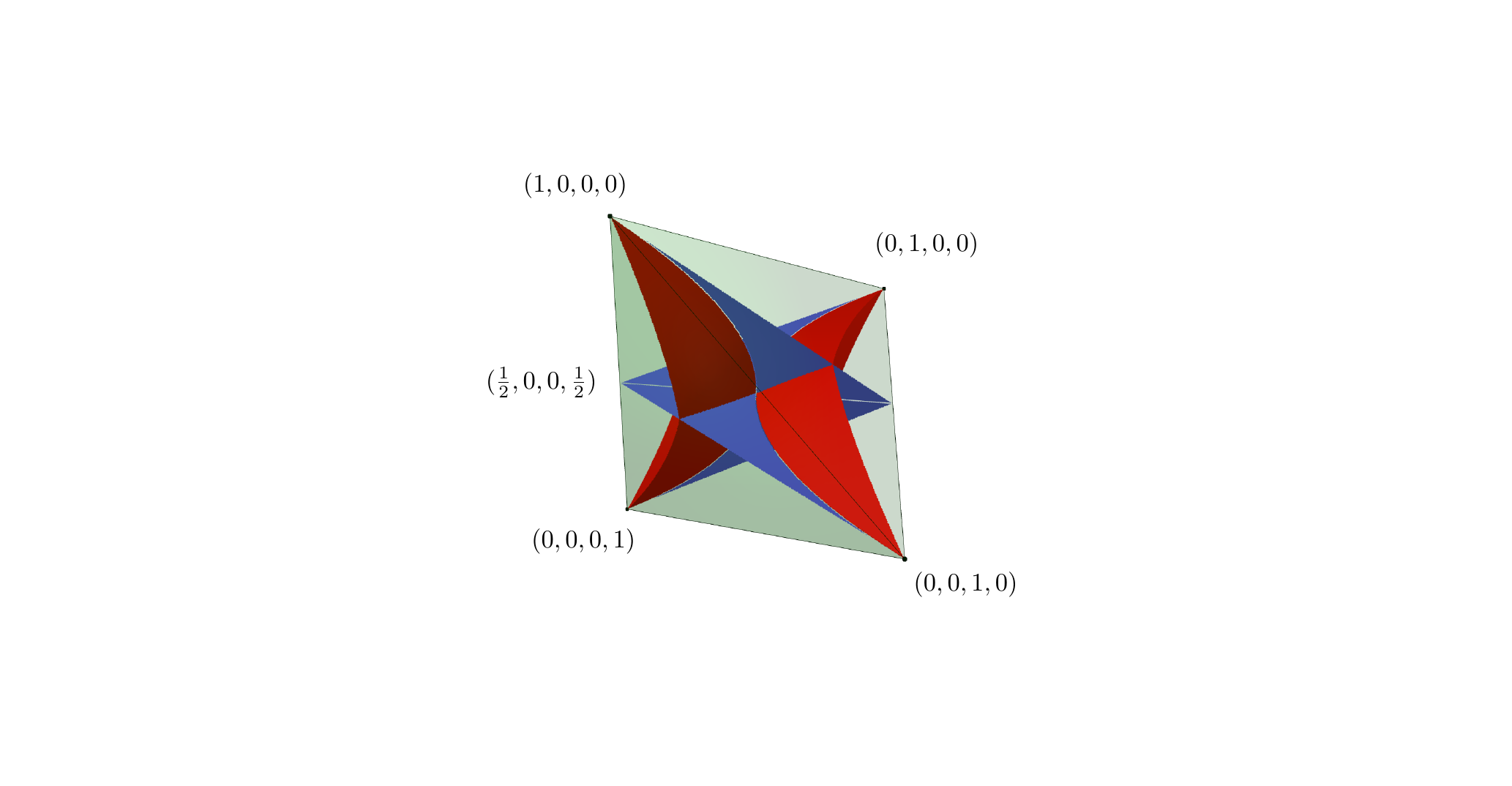}
	\qquad
	\includegraphics[scale=0.8, trim={6.8cm 2cm 7cm 2.5cm},clip]{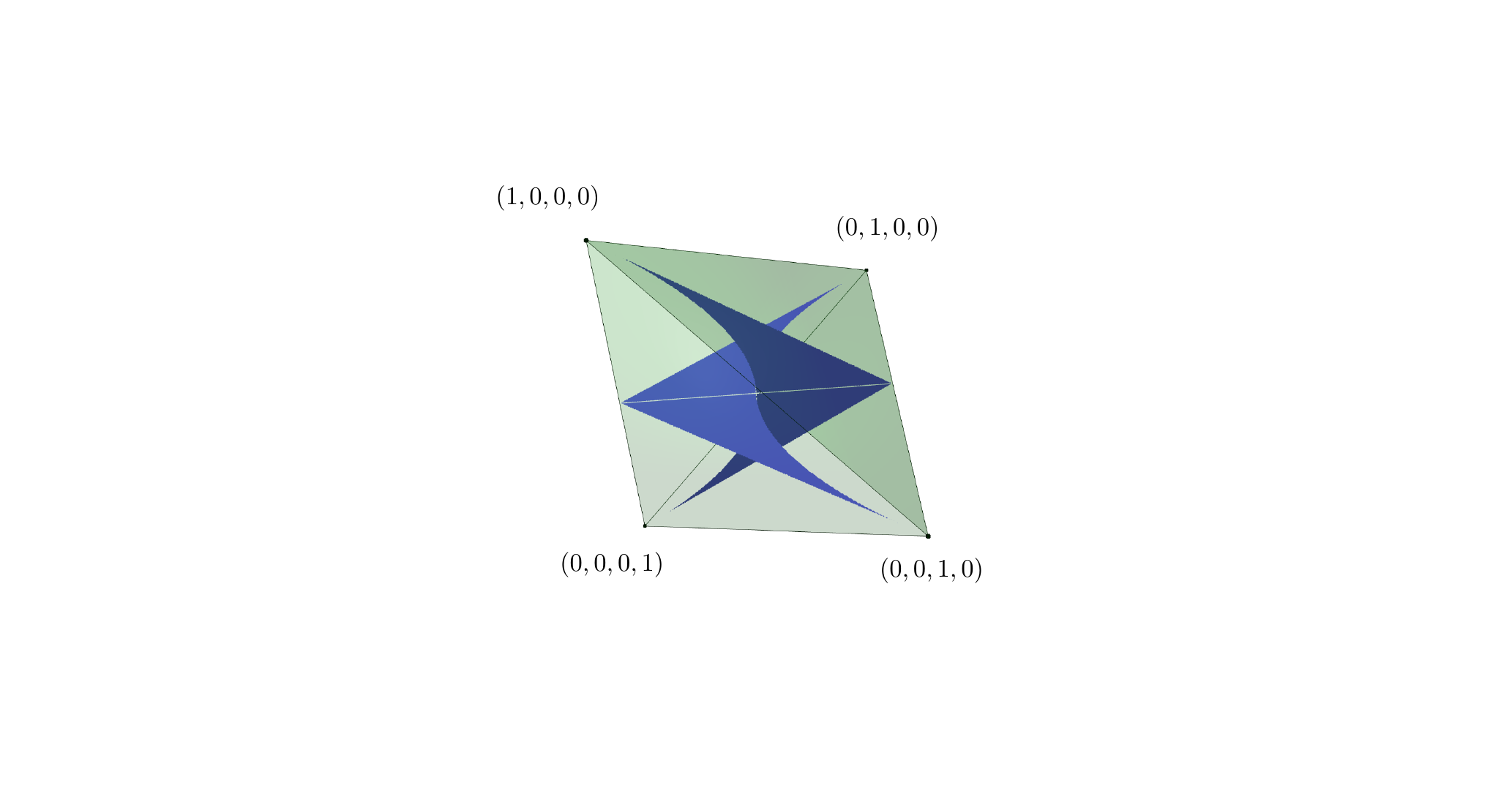}
	\vspace{-0.12in}
	\caption{The optimal value function of \Cref{thm: 3bits} subdivides the tetrahedron 
	of probability distributions $\mu$ (left). 
	The walls  of indecision are shown in blue (right).} 
	\label{fig:partition12}
\end{figure}

Theorem \ref{thm: 3bits} distinguishes eight cases. This division of $\Delta_3$ is shown in Figure~\ref{fig:partition12}.  
Each of the last four cases
breaks into two subcases, since the 
numerator in the formulas is the
absolute value of $\mu_1 \mu_4 - \mu_2 \mu_3$.
The sign of this $2 \times 2$ determinant  matters for the pieces
of our piecewise algebraic function.
The tetrahedron
$\Delta_3$ is divided into $12$ regions on which
$\mu \mapsto W_d(\mu,\mathcal{M})$ is algebraic. 

We now explain Figure~\ref{fig:partition12}. The red surface consists of eight pieces.
 Together with the blue surface, these separate the eight cases (this surface is not the model).  
Four convex regions are enclosed between the red surfaces and the 
sides they meet. 
These regions represent the first four cases in Theorem \ref{thm: 3bits}. For instance, the region containing the points $(1,0,0,0), (1/2,0,0,1/2)$ corresponds to the first case. The remaining four regions are each bounded by two red and two blue pieces, and correspond to the last four cases. Each of these four regions is further split in two by the model which we do not depict for the sake of visualization. The two 
sides are determined by the sign of the determinant $\mu_1 \mu_4 - \mu_2 \mu_3$.
The two blue shapes in the right figure form the walls of indecision. 
These specify the points $\mu\in\Delta_3$ with more than one optimal~solution.
 
 The same 2-bit model was studied in our conference paper \cite{macis}. Theorem~\ref{thm: 3bits} is a much improved representation of the results  
 in \cite[Table 2]{macis}. Our formulas can easily be translated into a
 description in terms of the parameters $(p,q)$ from (\ref{eq:model}).
The linear program we used in \eqref{eq:dual2} to
define the Wasserstein distance is dual to the one via optimal transport in
\cite[eqn (2)]{macis}. The latter primal formulation underlies the analysis in 
\cite[\S 5]{macis}. In Section \ref{sec3}, we will present a self-contained proof of Theorem \ref{thm: 3bits} after a general discussion of distance minimization for polyhedral norms.

\section{Polyhedral Norm Distance to a Variety} 
\label{sec3}

The Wasserstein metric on the simplex of probability distributions with $n$ states defines a polyhedral norm on $\mathbb{R}^m$ with $m=n-1$ as follows. 
We translate the simplex $\Delta_m$ such that its barycenter is the origin.
Next we consider a Wasserstein unit ball around the origin, denoted by $B$. This unit ball is a centrally symmetric  $m$-dimensional 
polytope $B$. It induces a norm on $\mathbb{R}^m$ by
\begin{equation*}
 \| y \|_B \,\, \, := \,\,\, {\rm min} \,\{ \,\lambda \in \mathbb{R}_{\geq 0} \,:\, y \in \lambda B \,\}. 
 \end{equation*}
In terms of the dual polytope
$$ B^* \,\,=\,\, \{ \,x \in \mathbb{R}^{m} \,:\, \sup_{z\in B} \langle x, z \rangle \leq 1 \,\},$$ 
the polyhedral norm can be rewritten as
\begin{equation*}
\| y \|_B \,\, \, = \,\,\, {\rm min} \,\{  \,\lambda\in\mathbb{R}_{\geq 0} \,:\,
\sup_{x\in B^*} \langle x, y \rangle \leq \lambda \,\} \,\,\, = \,\,\,
\max_{x\in B^*} \, \langle x,y \rangle.
\end{equation*}
Note that $(B^*)^* = B$. 
The dual of the unit ball equals
$$ B^* \,\, = \,\,
 P_d \,\, = \,\, \bigl\{\,x \in \mathbb{R}^n /\mathbb{R}{\bf 1} \,\,:\,\,\, |x_i - x_j| \,\leq\, d_{ij} \,\,\,\,
\text{for all} \,\, \,1\leq i < j \leq n \,\bigr\}.  $$
This is the Lipschitz polytope in \eqref{eq:polytope}, and
the unit ball $B  = P_d^*$ is its dual. 
This means that the Wasserstein unit ball $B$ is the convex hull of $n(n-1)$ vectors
that lie on a hyperplane in $\mathbb{R}^n$:
$$ B \,\, = \,\, P_d^* \,\,=\,\,\,
{\rm conv}\,
\biggl\{\,\frac{1}{d_{ij}} (e_i - e_j)\,\,:\,  \,\, \,1\leq i < j \leq n \,\biggr\}.  $$
In the case $m=n-1=2$,
two Wasserstein balls for different metrics $d$ 
were shown in Figure~\ref{fig:wassersteinball}. 

\begin{example}\label{ex: discrete_metric_3}
Fix $m=n-1=3$  and let $d$ be the $2$-bit Hamming metric in (\ref{eq:metric}).
We work in the linear space $L$ that is defined by $x_1+x_2+x_3+x_4=0$.
The Lipschitz polytope is the octahedron
\begin{align*} P_d & = B^* =
\, \, \bigl\{ \,(x_1,x_2,x_3,x_4) \in L\,:\,
|x_1-x_2| \leq 1,\,
|x_1-x_3| \leq 1,\,
|x_2-x_4| \leq 1,\,
|x_3-x_4| \leq 1 \,\bigr\} \\ & =
{\rm conv} \bigl\{ (1,0,0,-1), (1,0,0,-1), (\tfrac{1}{2}, -\tfrac{1}{2}, -\tfrac{1}{2},\tfrac{1}{2}), (-\tfrac{1}{2}, \tfrac{1}{2}, \tfrac{1}{2},-\tfrac{1}{2}), (0, 1, -1,0), (0, -1, 1,0)\bigr\}.
\end{align*}
%
The Wasserstein unit ball is the cube
$$ \begin{matrix} B\, =\, P_d^* & \!\! = \! &
\bigl\{ \,(y_1,y_2 , y_3,y_4) \in L\,:\,
|y_1 - y_4| \leq 1,\,
|y_2-y_3| \leq 1,\,
|y_2 + y_3| \leq 1  \bigr\} \\  & \!\! = \! & \hspace{-15mm}
{\rm conv} \bigl\{
(1,-1,0,0),(1,0,-1,0),(0,1,0,-1),(0,0,1,-1) \\
& & (-1,1,0,0),(-1,0,1,0), (0,-1,0,1),(0,0,-1,1) \bigr\}  .
\end{matrix} $$
\end{example}

Returning to the general case, suppose that $\mathcal{M}$ is a smooth compact algebraic variety in $\mathbb{R}^m$.
For any point $u \in \mathbb{R}^m$, we are interested in its distance to the variety under our polyhedral norm:
\begin{equation*}
 D_B(u,\mathcal{M}) \,\,\, := \,\,\,
{\rm min} \bigl\{ \,\| u-v \|_B \,:\, v \in  \mathcal{M} \,\bigr\} \,\,\, = \,\,\,
{\rm min} \bigl\{ \,\lambda \in \mathbb{R}_{\geq 0} \,:\,
(u + \lambda B ) \,\cap \, \mathcal{M} \,\not= \, \emptyset \,\bigr\}. 
\end{equation*}

We will now embark on understanding the geometry of this optimization problem.

\begin{proposition}\label{prop: unique}
If the model $\mathcal{M}$ and the point $u$ are in general position relative to the unit ball $B$ then 
there is a unique optimal point $v \in \mathcal{M}$ for which  $D_B(u,\mathcal{M})  = \|u-v\|_B = \lambda$ holds.
The point $\frac{1}{\lambda}(v-u)$ is in the relative interior of a unique face $F$ of the polytope $B$; we say that
$v$ has {\em type}~$F$.
\end{proposition}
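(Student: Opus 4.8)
The plan is to establish the two assertions separately, with a careful reading of what "general position" should mean. The starting point is the reformulation of $D_B(u,\mathcal{M})$ as the smallest $\lambda\geq 0$ for which the dilated ball $u+\lambda B$ meets $\mathcal{M}$. Since $\mathcal{M}$ is compact and the map $\lambda\mapsto(u+\lambda B)\cap\mathcal{M}$ is a nested family of compact sets, the minimum $\lambda$ is attained and the intersection $(u+\lambda B)\cap\mathcal{M}$ is a nonempty compact set. I would first argue that at this minimal $\lambda$, every point $v$ in this intersection lies on the \emph{boundary} of $u+\lambda B$: if some $v$ were in the interior, then a slightly smaller dilation would still contain $v$, contradicting minimality. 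Hence $\frac1\lambda(v-u)\in\partial B$, so it lies in the relative interior of a well-defined face $F=F(v)$ of $B$ (every boundary point of a polytope lies in the relative interior of exactly one proper face). This already makes the notion of "type" well defined for any optimal $v$; the content of the proposition is that, generically, $v$ and $F$ are unique.

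For uniqueness of $v$, the natural route is a transversality/dimension argument. At the optimal $\lambda$, consider the supporting hyperplane $H$ of $B$ through the face $F$, and its translate $H_v$ supporting $u+\lambda B$ at $v$. Because $\mathcal{M}$ lies entirely in the halfspace $\{u+\lambda B\}$-side of $H_v$ (again by minimality of $\lambda$), $\mathcal{M}$ is tangent to $H_v$ at $v$: the tangent space $T_v\mathcal{M}$ is contained in the linear part of $H_v$. The "general position" hypothesis should be read as requiring that (i) no two distinct points of $\mathcal{M}$ simultaneously achieve the optimum — equivalently, $\mathcal{M}$ meets the boundary sphere $u+\lambda\partial B$ in a single point — and (ii) at that point $v$, the contact is with the relative interior of a single face, i.e.\ $\frac1\lambda(v-u)$ is not on a face of $B$ of codimension $\geq 2$ in an unexpected way; equivalently the tangency is "as transverse as possible." I would phrase this by saying the pair $(u,\mathcal{M})$ avoids a measure-zero "discriminant" in the space of translates, exactly as in the Euclidean ED-discriminant story of \cite{DHOST}; the polytope $B$ being fixed, the bad locus is a finite union of algebraic conditions (one per face $F$: the condition that $u+\lambda B$ is tangent to $\mathcal{M}$ along $\mathrm{relint}(F)$ for two values of contact point, or that two faces are active at once), hence closed and proper, so its complement is the desired open dense set.

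Concretely, the key steps in order are: (1) existence and attainment of the optimal $\lambda$ by compactness; (2) every optimal $v$ satisfies $\frac1\lambda(v-u)\in\partial B$, hence has a type $F$; (3) for each face $F$, set up the incidence variety $\{(w,v,\lambda): v\in\mathcal{M},\ \frac1\lambda(v-w)\in\mathrm{relint}(F),\ T_v\mathcal{M}\subseteq \mathrm{lin}(F^\perp)\}$ and show its projection to the $w$-coordinate has image whose "multiple-point" locus, together with the lower-dimensional faces' contributions, is contained in a proper closed subset $\Sigma_B\subset\mathbb{R}^m$; (4) for $u\notin\Sigma_B$, conclude that the optimal point is unique and its type is the unique face $F$ with $\frac1\lambda(v-u)\in\mathrm{relint}(F)$.

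The main obstacle I anticipate is step (3): making precise and provable the claim that the "bad" set $\Sigma_B$ is a proper closed (indeed measure-zero) subset. This requires a genericity/Sard-type argument over the arrangement of faces of $B$ simultaneously, and one must be careful that "$\mathcal{M}$ and $u$ in general position relative to $B$" is not vacuous — i.e.\ that for fixed $B$ and $\mathcal{M}$ the set of good $u$ is nonempty. I expect the cleanest formulation is to fix $\mathcal{M}$, let $u$ vary, and observe that the optimal $v$ depends semialgebraically on $u$; the non-uniqueness locus and the "degenerate type" locus are then semialgebraic of dimension $<m$, because on each top-dimensional stratum the optimality conditions (tangency to a fixed face $F$) cut out $v$ as a finite algebraic function of $u$, so coincidences happen only on a proper subvariety. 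Smoothness and compactness of $\mathcal{M}$ are used to guarantee that the tangency conditions are the correct local description of optimality.
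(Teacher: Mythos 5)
Your first half coincides with the paper's argument: the optimum is attained by compactness, an optimal $v$ satisfies $\frac{1}{\lambda}(v-u)\in\partial B$, and since the boundary of a polytope is the disjoint union of the relative interiors of its proper faces, the type $F$ is well defined. The genuine gap is that the heart of the proposition -- uniqueness of $v$ -- is exactly what your step (3) is supposed to deliver, and you leave it as an acknowledged obstacle with only a sketch. Moreover, your formulation fixes $\mathcal{M}$ and lets only $u$ vary, and, as you yourself worry, it is then not clear that the set of ``good'' $u$ is nonempty or dense: for a fixed $\mathcal{M}$ you would have to prove that the value functions attached to two different faces can tie only on a lower-dimensional set, and ruling out identical coincidence of two branches is not automatic (the walls of indecision in Section 2 are precisely such tie loci, and symmetries of $\mathcal{M}$ relative to $B$ are the danger). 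The paper's hypothesis is different and does the heavy lifting for you: general position is imposed on \emph{both} $\mathcal{M}$ (via a Zariski dense subset of the rotation group) and $u$ (via translations), and the proof never constructs a discriminant $\Sigma_B$. Instead, with the face $F$ fixed, general position is invoked to say that $u+L_F$ meets $\mathcal{M}$ transversally, so $(u+L_F)\cap\mathcal{M}$ is a smooth compact variety, and the linear functional $\ell_F$ is generic for it; hence there are finitely many critical points and the minimum is attained at a unique point, namely $v$, as in the setup (\ref{eq:generalopt}). If you insist on your route, you must supply the semialgebraic dimension count for the union over all pairs of faces, including the cross-face tie locus, and justify why moving $u$ alone suffices -- or else enlarge the parameter space as the paper does.

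There is also a local error in your tangency discussion: minimality of $\lambda$ only gives $\mathcal{M}\cap \operatorname{int}(u+\lambda B)=\emptyset$; it does not place all of $\mathcal{M}$ on one side of the supporting hyperplane $H_v$, since $\mathcal{M}$ can wrap around the ball. The correct first-order statement is local: $T_v\mathcal{M}$ cannot meet the interior of the tangent cone of $u+\lambda B$ at $v$, so by separation there exists a supporting hyperplane at $v$ whose normal lies in the normal cone of $B$ along $F$ and whose linear part contains $T_v\mathcal{M}$; when $F$ has codimension at least $2$ there is no single hyperplane ``through the face'' as your phrasing suggests, only a cone of admissible normals. This is repairable, but since this condition defines the incidence variety in your step (3), the repair is needed before that step could even be made precise.
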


The general position hypothesis is understood as follows.
 The rotation group and the translation group act on $\mathbb{R}^m$.
These two algebraic groups have  Zariski dense subsets such that the hypothesis holds after
applying group elements from those two subsets to $\mathcal{M}$ and $u$ respectively.

\begin{proof} We have $\lambda = D_B(u,\mathcal{M})$, so 
$\frac{1}{\lambda}(v-u)$ lies in the boundary of the unit ball $B$.
The polytope $B$ is the disjoint union of the relative interior
of its faces. Hence there exists a unique face $F$ that has
$\frac{1}{\lambda}(v-u)$ in its relative interior.
Let $L_F$ be the linear subspace of $\mathbb{R}^{m}$ 
that consists of linear combinations of vectors in $F$.
By hypothesis, the resulting affine subspace $u + L_F$ intersects 
the variety $\mathcal{M}$ transversally, and $v$ is a 
general smooth point in that intersection. Moreover,
$v$ is a minimum of the restriction to the variety  $(u+L_F) \cap \mathcal{M}$
of a linear function on $u+L_F$. Our hypothesis ensures that
the  linear function is generic relative to the variety,
which in turn is smooth and compact. The number of
critical points is finite. This guarantees that the linear
function attains its minimum at a unique point in the variety, namely at $v$.
\end{proof}

\smallskip

Our geometric discussion becomes very concrete in the
Wasserstein case. The data point is $u = \mu$ and the optimal point is $v = \nu^*$.
The type of $v$ is a face $F$ of the unit ball $B = P_d^*$.
Fix the face $F$. This allows for the following algebraic characterization of optimality.
Let $\mathcal{F}$ be the set of all index pairs $(i,j)$ 
such that the point $\frac{1}{d_{ij}}(e_i-e_j)$ is a vertex and it lies in $F$.
Let $\ell_F$ be any linear functional on $\mathbb{R}^{m}$
that attains its maximum over $B$ at $F$.
We work in the linear~space
\begin{equation}
\label{eq:elleff}
 L_F \,\,\, = \,\, \,
\left\{ \, \sum_{(i,j) \in \mathcal{F}} \!\! \lambda_{ij} (e_i - e_j) \,\,:
\,\, \lambda_{ij} \in \mathbb{R} \,\right\}. 
\end{equation}

The point $\nu^*$  on $\mathcal{M}$ that is closest to $\mu$ is the
solution of the following optimization problem:
\begin{equation}
\label{eq:generalopt}
 \hbox{{\rm Minimize} $\,\ell_F = \ell_F(\nu)\,$ subject to $\,\nu \in (\mu + L_F) \cap \mathcal{M}$.} 
\end{equation} 
This is a polynomial optimization problem in the linear subspace $L_F$ of $\mathbb{R}^{m}$.
With the notation in (\ref{eq:elleff}), the
decision variables are $\lambda_{ij}$ for  $(i,j) \in \mathcal{F}$.
The algebraic complexity of this  problem 
will be studied in Section \ref{sec5}. In Section \ref{sec4}, we
focus on the combinatorial complexity.
 The unit ball $B$ has very many faces, and
our desire is to control that combinatorial explosion.
For the remainder of this section, we return to the three-dimensional case seen
in Section \ref{sec2}, and we present a proof of Theorem~\ref{thm: 3bits}
that uses  the set-up above.
Theorem \ref{thm: Hardy-Weinberg} is analogous and its proof will be omitted.
\begin{figure}[h]
	\centering
	\includegraphics[scale=0.65]{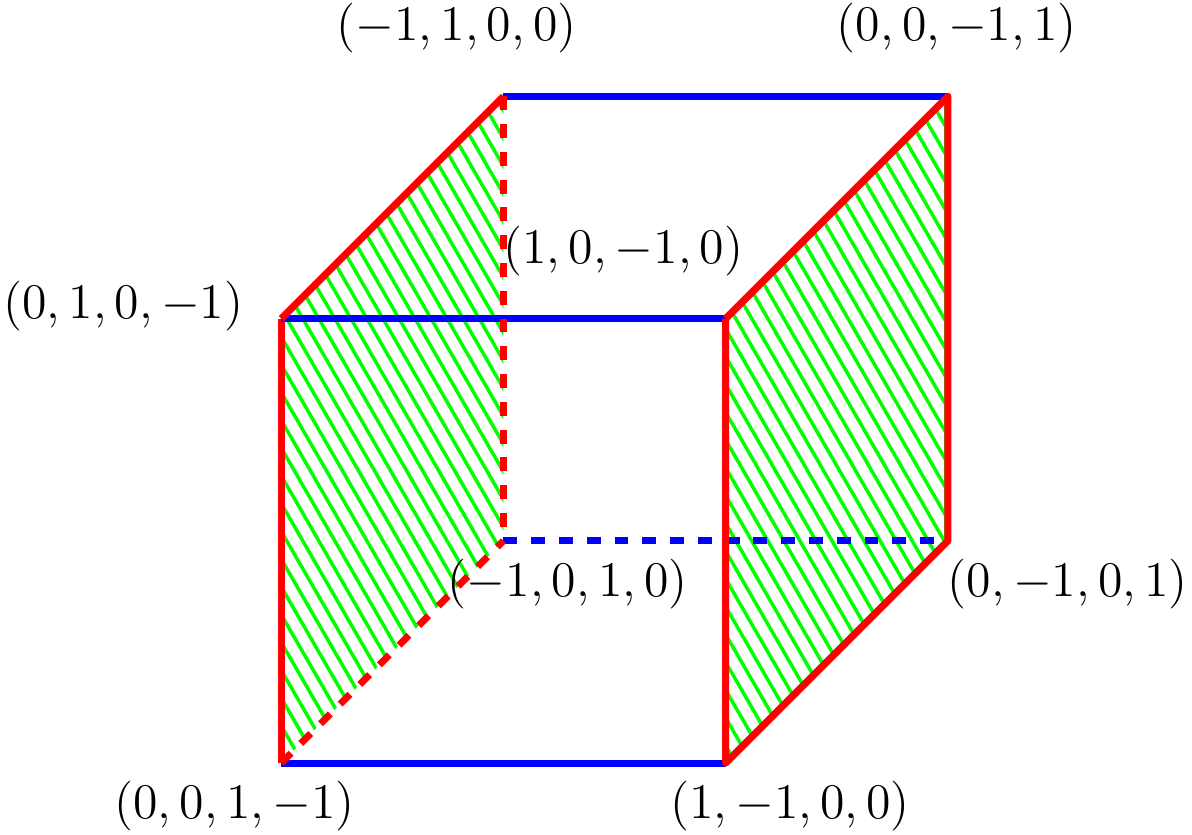}
	\caption{Subdivision of the faces of the Wasserstein ball as in the proof of \Cref{thm: 3bits}.}
	\label{fig:proof_2.2}	
\end{figure}

\begin{proof}[Proof of Theorem~\ref{thm: 3bits}]
The Wasserstein unit ball is the cube $B$ in \Cref{ex: discrete_metric_3}. We must solve \eqref{eq:generalopt} for every face $F$ of $B$. There are various symmetries we can employ to simplify the proof. First, since $B$ is centrally symmetric, we study only one among a face
$F$ and its negative $-F$. Since $L_F=L_{-F}$, minima in \eqref{eq:generalopt} for
$F$ turn into maxima for $-F$, and vice versa. Second, consider the dihedral group $D_4
$ of order $8$ that is generated by the involutions
$ (14) $ and $(12)(34)$ in the symmetric group on $\{1,2,3,4\}$. This 
acts on the tetrahedron $\Delta_{3}$, on the cube $B$, and on the model $\mathcal{M}$,
by permuting coordinates in $\mathbb{R}^4$. The action
respects scalar products: $\langle c,x \rangle=\langle g\cdot c,g\cdot x \rangle$ for every $g\in D_4$. Therefore, $g\cdot F$ is a face of $B$ for every face $F$
and every $g \in D_4$, and the problem \eqref{eq:generalopt} is symmetric under $D_4$.
The solution function satisfies
 $\,\nu^*(g\cdot \mu)=g \cdot \nu^*(\mu)\,$ for all $g \in D_4$.
 
For each vertex, edge or $2$-face, one per symmetry class, we introduce Lagrange multipliers to compute the
critical points of (\ref{eq:generalopt}). In each case, there
are at most two critical points, since the
polar degrees are $\delta=(2,2,2)$;
see~$k=2$ in Table~\ref{tab:dims}.
We now undertake a case-by-case analysis:
\begin{itemize}
		\item $\dim(F)=2$:	The green facets in
		 \Cref{fig:proof_2.2} give two orbits. For the first facet, Lagrange multipliers reveal a critical point $\nu^*=(1/4,1/4,1/4,1/4)$. However, the associated constrained Hessian is indefinite, and hence $\nu^*$ is not a local minimum. 	 The second facet
 has no critical points in $\Delta_3$. Hence there is never any optimal solution whose type is a facet.
 		\item $\dim(F)=1$: We have two orbits of edges, marked in red (bounding the green facets) and blue in \Cref{fig:proof_2.2}. 
		Representatives are
		 $E_1=\conv\{(-1,1,0,0),(-1,0,1,0)\}$ and $E_2=\conv\{(1,-1,0,0),(0,0,1,-1)\}$. 
		 For the first, we have $L_{E_1}=\{x_4=0, x_1+x_2+x_3=0\}$ and $\ell_{E_1}=-x_1+x_4$. The associated Lagrangian system has two solutions one of which is contained in $\Delta_3$, namely $\nu^*=(-2\sqrt{\mu_4}+\mu_4+1,\sqrt{\mu_4}-\mu_4,\sqrt{\mu_4}-\mu_4, \mu_4)$. The constrained Hessian reveals that $\nu^*$ is a local minimum. It remains to determine the constraints of the region on which $\nu^*$ lies in the interior of $E_1$. 
They can be obtained from the inequalities defining the  $2$-dimensional~cone
\[ C_{E_1}\,\,:= \,\,\{\,\lambda_{12}(e_2-e_1)+\lambda_{13}(e_3-e_1)\,\,:\,\,
 \lambda_{12}, \lambda_{13} \in\mathbb{R}_{\geq 0}\,\}. \]
		Then $\nu^*\in \mu + C_{E_1}$ if and only if $\nu^*_2-\mu_2\geq 0$ and $\nu^*_3-\mu_3\geq 0$, that is $\sqrt{\mu_4}-\mu_4-\mu_2\geq 0$ and $\sqrt{\mu_4}-\mu_4-\mu_3\geq 0$. As $\ell_{E_1}=-x_1+x_4$, the corresponding optimal 		 Wasserstein distance is 
$$ \quad W(\mu,\nu^*)\,\,\,=\,\,\,
\ell_{E_1}(\nu^* - \mu)
\,\,\, = \,\,\,
2\sqrt{\mu_4}+\mu_1-\mu_4-1\,\,=\,\,2\sqrt{\mu_4}(1-\sqrt{\mu_4})-\mu_2-\mu_3. $$
		The optimization problem associated to $E_2$ does not have critical points.
\item $\dim(F)=0$: The eight vertices of $B$ form one orbit. We consider $v=(1,-1,0,0)$, with associated
		zero-dimensional variety	$(\mu + L_v)\cap \mathcal{M} $. This consists of a unique point
	 $\nu^*=(\frac{\mu_3(\mu_1+\mu_2)}{\mu_3+\mu_4}, \frac{\mu_4(\mu_1+\mu_2)}{\mu_3+\mu_4},\mu_3,\mu_4)$. Depending on $\mu$, this point can lie either on the ray through $\mu+v$, denoted $\mu+C_v$, or on the ray through $\mu-v$. We have $\nu^*\in  (\mu+C_{v})\cap\mathcal{M}$ if and only if $\nu^*_1-\mu_1\geq0$, that is $\frac{\mu_2\mu_3-\mu_1\mu_4}{\mu_3+\mu_4}\geq 0$.
		In this case we choose $\ell_{v}=-x_2-x_3$, and we~obtain 
$$				
W(\mu,\nu^*)\,\,\,=\,\,\, \ell_{v}(\nu^* - \mu) \,\,\, = \,\,\,		
		-\frac{\mu_4(\mu_1+\mu_2)}{\mu_3+\mu_4}-\mu_3+\mu_2+\mu_3
	\,	\,\,=\,\,\, \frac{\mu_2\mu_3-\mu_1\mu_4}{\mu_3+\mu_4}. $$
	\end{itemize}
We act with the dihedral group $D_4$ on the two local minima we found. 
This yields the eight expressions for $\nu^*$  shown in  Theorem~\ref{thm: 3bits}.
It remains to decide which point $\nu^*$ is the global minimum. This is done by pairwise comparison
of the eight expressions for the Wasserstein distance $W_d(\mu,\nu^*)$.
 We omit this last step, since it consists of elementary algebraic manipulation.
\end{proof}

\section{Lipschitz polytopes}                    \label{sec4}

The combinatorial complexity of our problem is governed by the facial structure of the
Wasserstein ball given by a finite metric space $([n],d)$. We now focus on the polar dual
of that ball, which is the Lipschitz polytope $\,P_d $. This lives in
$\,\mathbb{R}^n/\mathbb{R}{\bf 1} \simeq \mathbb{R}^{n-1}$,
and is defined in~(\ref{eq:polytope}).

This object appears in the literature in several guises. 
See e.g.~\cite{GP} for a study that emphasizes generic distances $d_{ij}$.
  We consider specific metrics that are relevant for the independence~model: 
   \begin{itemize}
\item The discrete metric on any finite set $[n]$ where $\,d_{ij} = 1$ for  distinct $i,j$. 
	\item The $L_0$-metric on $[m_1]\times\dots\times[m_k]$ where $\,d_{ij} = \# \{l: i_l\neq j_l \}$. 
	\item The $L_1$-metric on $[m_1]\times\dots\times[m_k]$ where $\,d_{ij} =\sum_{l=1}^k | i_l-j_l|$.
\end{itemize}
For the last two metrics we have $n=m_1 \cdots m_k$.
To compute the Wasserstein distance in each case,  we need to
 describe the Lipschitz polytope $P_d$ as explicitly as possible.
 All three metrics above can be interpreted as \emph{graph metrics}. 
 This means that there exists an undirected simple graph 
 $G$ with vertex set $[n]$  such that $d_{ij}$ is the length of the shortest path from $i$ to $j$ in $G$. 
 Wasserstein balls associated to graphs in this way are 
 studied  in \cite{DDM} under the name \emph{symmetric edge polytopes}.

  For the discrete metric on $[n]$, the graph is the complete graph $K_n$. In the case of the $L_0$-metric on $[m_1]\times\dots\times[m_k]$, we have the Cartesian product of complete graphs $K_{m_1}\times\dots\times K_{m_k}$. 
  In the last case, the corresponding graph is the Cartesian product of paths of length $m_1,\dots,m_k$. 
  The facets of the Lipschitz polytope $P_d$ arising from a graph $G$ correspond to the edges of $G$. We have
  	\begin{equation}
P_{d}\,\,=\,\,\{\,x\in \mathbb{R}^n/\mathbb{R}{\bf 1}\,:\, |x_i-x_j|\leq 1 \, \text{ for every edge } (i,j) \,\,{\rm of}\,\, G \,\}. 
	\label{eq: Lipschitz_reduced}
	\end{equation}
This representation of $P_d$ is a consequence of the triangle inequality.
Vertices of 
$P_d$ are 
precisely those points for which at least $\dim(P_{d})$
inequalities are sharp. More generally, we are interested in higher-dimensional faces of $P_d$. The number of $i$-dimensional faces of $P_d$ is denoted by $f_i = f_i(P_d)$,
and we write  $f= (f_0,f_1,\ldots,f_{n-2})$ for the {\em f-vector}. 
Since $P_d$ is $(n-1)$-dimensional,
we have $f_{n-1}(P_d)=1$, and we omit this number. In general, it is difficult to compute the $f$-vector.
 
If $d$ is the discrete metric on $[n]$, then
we have the following description of the faces.
The corresponding Lipschitz polytope
$P_d$ is a zonotope, namely it is  the Minkowski sum of $n$ general segments
in $(n-1)$-space. For $n=4$ this is the rhombic dodecahedron \cite[Figure 4]{JK}.
Its dual, the Wasserstein ball for the discrete metric on $[n]$,
is  the \emph{root polytope} of Lie type A; cf.~\cite{JK, Ngoc}.

\begin{lemma} \label{lem:rhombic}
Let $d$ be the discrete metric on $[n]$. The vertices of $P_d$ are the binary vectors
$\sum_{i \in I} e_i$	where $I$ runs over elements of the power set $\, 2^{[n]}\backslash \{\emptyset,[n]\}$. Furthermore,
	   a subset $\,S$ of $2^{[n]}\backslash \{\emptyset,[n]\}$ 
indexes the vertices of  a face of $P_d$ if and only if 
$\,S=\{I: L\subseteq I \subseteq U\}$ for some 
$L,U\in 2^{[n]}\backslash \{\emptyset,[n]\}$.
\end{lemma}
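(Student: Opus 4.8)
The plan is to identify the Lipschitz polytope $P_d$ of the discrete metric explicitly, then read off its faces from a direct linear-algebra argument. First I would pass to the representation \eqref{eq: Lipschitz_reduced} with $G=K_n$: a point $x\in\mathbb{R}^n/\mathbb{R}\mathbf 1$ lies in $P_d$ iff $|x_i-x_j|\le 1$ for all $i\ne j$, equivalently $\max_i x_i-\min_i x_i\le 1$. Choosing the representative with $\min_i x_i=0$, the polytope becomes $\{x:0\le x_i\le 1\}$ intersected with the requirement that some coordinate be $0$ — but it is cleaner to observe that $P_d=\operatorname{conv}\{x\in\{0,1\}^n\}$ modulo $\mathbb{R}\mathbf 1$, and that the two cube vertices $\mathbf 0$ and $\mathbf 1$ are identified to the same point of $\mathbb{R}^n/\mathbb{R}\mathbf 1$, while every other $0/1$ vector $\sum_{i\in I}e_i$ with $\emptyset\ne I\ne[n]$ gives a distinct point. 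So $P_d$ is (an affine image of) the $n$-cube with one pair of antipodal vertices glued; its vertex set is exactly $\{\sum_{i\in I}e_i:I\in 2^{[n]}\setminus\{\emptyset,[n]\}\}$, which is the first assertion.

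For the facial structure I would argue as follows. In the cube $[0,1]^n$, the nonempty faces are the sets $\{x:x_i=0\ \forall i\in A,\ x_j=1\ \forall j\in B\}$ for disjoint $A,B\subseteq[n]$, and such a face has vertex set $\{\sum_{i\in I}e_i:B\subseteq I\subseteq [n]\setminus A\}$, i.e.\ the interval $[L,U]$ in the Boolean lattice with $L=B$, $U=[n]\setminus A$. Now project via $q:\mathbb{R}^n\to\mathbb{R}^n/\mathbb{R}\mathbf 1$. Since $P_d=q([0,1]^n)$ is the image of a polytope under a linear map whose only nontrivial fiber on the cube is $\{\mathbf 0,\mathbf 1\}$, the faces of $P_d$ are precisely the images $q(\mathcal{G})$ of faces $\mathcal{G}$ of $[0,1]^n$ that are "compatible" with the identification — and because $\mathbf 0$ and $\mathbf 1$ are vertices, no face other than the two vertices themselves is collapsed, and the two vertices map to the single (non-)vertex which is not a face of $P_d$. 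I would make this precise by checking that each supporting hyperplane of $[0,1]^n$ of the form $\sum_{i\in B}x_i-\sum_{i\in A}x_i=|B|$ descends to a supporting hyperplane of $P_d$ on the line $x_1+\cdots+x_n=\text{const}$ exactly when $(A,B)\ne(\emptyset,\emptyset)$ and $(A,B)\ne([n],\emptyset)$-type degeneracies are excluded — concretely, when both $L=B\ne\emptyset$... rather, when the resulting interval $[L,U]$ has $L\ne\emptyset$ and $U\ne[n]$ is \emph{not} required, since e.g.\ $L=\emptyset$ is fine as long as the face does not contain the glued vertex pair. The clean statement to verify is: $q$ restricted to the face $[L,U]_{\text{cube}}$ is injective unless $\{L,U\}=\{\emptyset,[n]\}$, i.e.\ unless the cube face is the whole cube; and $q$ of a proper cube face is a face of $P_d$ whose vertex set is $\{\sum_{i\in I}e_i:L\subseteq I\subseteq U\}$ with $L,U\in 2^{[n]}\setminus\{\emptyset,[n]\}$ — after deleting the glued vertices from the interval when $L=\emptyset$ or $U=[n]$, which merely shrinks the interval to one with admissible endpoints. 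Conversely every such Boolean interval arises this way.

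The main obstacle, and the step I would spend the most care on, is the bookkeeping around the quotient by $\mathbb{R}\mathbf 1$: one must show that the correspondence "faces of $P_d$ $\leftrightarrow$ nontrivial faces of the glued cube" is a bijection, and that after gluing, a Boolean interval $[L',U']$ with $L'=\emptyset$ (resp.\ $U'=[n]$) has the vertex $\mathbf 0$ (resp.\ $\mathbf 1$) removed, so its image is the same as the image of $[L,U]$ with $L,U$ the inclusion-minimal/maximal \emph{admissible} sets, still giving an interval of the stated form. A slick way to sidestep ambiguity is to prove directly: for $L,U\in 2^{[n]}\setminus\{\emptyset,[n]\}$ with $L\subseteq U$, the linear functional $\ell(x)=\sum_{i\in U}x_i-\sum_{i\notin L}x_i$ (well-defined on $\mathbb{R}^n/\mathbb{R}\mathbf 1$ since its coefficients sum to... one adjusts by a multiple of $\mathbf 1$) is maximized over $P_d$ exactly on $\{\sum_{i\in I}e_i:L\subseteq I\subseteq U\}$, and every face arises from such an $\ell$ by considering which facet inequalities $x_i-x_j\le 1$ are tight. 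I would finish by noting the facets correspond to $|U\setminus L|=n-2$... no: facets are the case where the interval has exactly two elements only in small $n$; rather I would not belabor dimension counts and instead let the interval description itself encode the full face lattice, which is what the lemma asserts.
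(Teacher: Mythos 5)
Your main route has a genuine gap. The identification $P_d=q([0,1]^n)$, where $q:\mathbb{R}^n\to\mathbb{R}^n/\mathbb{R}\mathbf 1$ is the quotient map, is correct, but $P_d$ is \emph{not} ``the cube with one pair of antipodal vertices glued,'' and the claimed correspondence ``faces of $P_d$ $\leftrightarrow$ proper faces of the cube'' is false. Under a projection, faces need not map to faces: for $n=3$ the image is a hexagon, $q(\mathbf 0)=q(\mathbf 1)$ is its \emph{center} (an interior point), and each of the six cube facets maps to a quadrilateral through that center, which is not a face of $P_d$. In general every cube face containing $\mathbf 0$ or $\mathbf 1$ fails to map to a face, and your patch ``delete the glued vertex and shrink to an admissible interval'' does not work either, since $[\emptyset,U]\setminus\{\emptyset\}$ is not a Boolean interval once $|U|\ge 2$. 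For the same reason the first assertion is not established by the gluing picture: vertices of a polytope can land in the interior of its projection (indeed $\mathbf 0,\mathbf 1$ do), so you must argue separately that each $q(e_I)$, $\emptyset\ne I\ne[n]$, is a vertex — e.g.\ as the paper does, by noting that the constraints $x_i-x_j\le 1$ tight at $e_I$ (those with $i\in I$, $j\notin I$) span an $(n-1)$-dimensional space of functionals on $\mathbb{R}^n/\mathbb{R}\mathbf 1$, or by exhibiting a functional uniquely maximized there.

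The ``slick way'' you sketch at the end is essentially the paper's actual proof of the face description, and that is where the work should go: take any linear functional $\ell=\sum_i a_ix_i$ with $\sum_i a_i=0$ (this is required for $\ell$ to be well defined on $\mathbb{R}^n/\mathbb{R}\mathbf 1$ — your $\sum_{i\in U}x_i-\sum_{i\notin L}x_i$ is not, and ``adjusting by a multiple of $\mathbf 1$'' changes the sign pattern, so it must be done explicitly), set $L=\{i:a_i>0\}$ and $U=\{i:a_i\ge 0\}$, and check that the maximizers of $\ell$ over $P_d$ (equivalently over $[0,1]^n$, since $\ell$ kills $\mathbf 1$) are exactly $\conv\{e_I: L\subseteq I\subseteq U\}$; conversely, for admissible $L\subseteq U$ one can choose weights positive on $L$, zero on $U\setminus L$, negative outside $U$, summing to zero, because $L\ne\emptyset$ and $U\ne[n]$. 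The key observation that makes the bookkeeping disappear is that a \emph{nonzero} functional with zero coefficient sum automatically has $L\ne\emptyset$ and $U\ne[n]$, so every proper face is an admissible interval and every admissible interval arises. As written, your proposal asserts this final step without carrying it out and rests the rest of the argument on the faulty face correspondence, so it does not yet constitute a proof.
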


\begin{proof}
Clearly, $e_I=\sum_{i\in I} e_i$ lies in $P_d$. We observe that $(e_I)_i-(e_I)_j=1$ if and only if $i\in I$ and $j\notin I$. The corresponding linear forms $x_i - x_j$
for $i \in I$ and $j \not\in I$ span an $(n-1)$-dimensional space.
This means that $e_I$ is a vertex of $P_d$.
Conversely, there are no vertices other than the $e_I$ since
 $v_i-v_j=1$ implies $v_i=1$ and $v_j=0$ for $v\in \mathbb{R}^n/\mathbb{R}{\bf 1}$.
   For the second statement, consider any linear functional $\ell$ on $P_d$.
   We have $\ell = \sum_{i=1}^n a_i x_i$ where $\sum_{i=1}^n a_i = 0$.
   Set $   L = \{i : a_i > 0\}$ and $U = \{i: a_i \geq 0\}$.
   Then $\ell$ is maximized over $P_d$ at the convex hull
   of $\{ e_I \, : \, L \subseteq I \subseteq U \}$, so this is a face.
   Every face is the set of maximizers of a linear functional   on $P_d$.
   This proves the claim.
\end{proof}   

From this description of  $P_d$ we can read off the number of faces in each dimension.

\begin{corollary}\cite[Proposition 4.3]{CM}
Let $d$ be the discrete metric on $[n]$. Then	
$$ f_i(P_d)\,=\, f_{n-i-2}(P_d^*)\,\,=\,\,\binom{n}{i}(2^{n-i}-2) \qquad
\hbox{ for  $\,i=0,\dots,n-2$.} $$
\end{corollary}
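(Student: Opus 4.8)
The plan is to count, for each dimension $i$, the faces of $P_d$ directly from the combinatorial description provided by Lemma~\ref{lem:rhombic}. A face is determined by a pair $(L,U)$ with $L,U \in 2^{[n]}\setminus\{\emptyset,[n]\}$ and $L\subseteq U$, and its vertices are the sets $I$ with $L\subseteq I\subseteq U$. The dimension of such a face equals $|U\setminus L|$: the vertices $e_I$ with $L\subseteq I\subseteq U$ affinely span a translate of the coordinate subspace indexed by $U\setminus L$, since varying one element of $U\setminus L$ in or out of $I$ moves $e_I$ by a standard basis vector, and these moves are independent in $\mathbb{R}^n/\mathbb{R}\mathbf 1$ as long as we do not use all of $[n]$. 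So I first establish $\dim(\mathrm{face}(L,U)) = |U\setminus L|$ and note that distinct pairs $(L,U)$ give distinct faces, since $L$ and $U$ are recovered from the face as the intersection and union of its vertex-sets.

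Next I count pairs $(L,U)$ with $|U\setminus L| = i$ subject to the constraints $L\neq\emptyset$, $U\neq [n]$, and $L\subseteq U$. First choose the $i$-element set $D = U\setminus L$: there are $\binom{n}{i}$ ways. The remaining $n-i$ elements are partitioned between $L$ and the complement of $U$: each such element is either in $L$ (hence in $U$) or outside $U$ (hence outside $L$), giving $2^{n-i}$ assignments. The forbidden cases are exactly $L=\emptyset$ (all $n-i$ remaining elements go outside $U$) and $U=[n]$ (all $n-i$ remaining elements go into $L$); these are distinct since $i\le n-2$ forces $n-i\ge 2$. Hence the number of $i$-faces is $\binom{n}{i}(2^{n-i}-2)$, which is the claimed formula. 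The polar-duality identity $f_i(P_d) = f_{n-i-2}(P_d^*)$ is the standard fact that the face lattice of a polytope is anti-isomorphic to that of its polar dual, applied to the $(n-1)$-dimensional polytope $P_d$.

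This argument is entirely bookkeeping once Lemma~\ref{lem:rhombic} is in hand, so there is no real obstacle; the only point requiring a little care is the dimension computation $\dim(\mathrm{face}(L,U)) = |U\setminus L|$, i.e.\ verifying that the vectors $\{e_i : i\in U\setminus L\}$ remain linearly independent after passing to the quotient $\mathbb{R}^n/\mathbb{R}\mathbf 1$ — this is where the hypothesis $U\neq[n]$ (equivalently $i\le n-2$) is used, since it guarantees $U\setminus L$ omits at least one coordinate and so the images of those basis vectors stay independent. I would also remark that the boundary cases $i=0$ and $i=n-2$ can be checked against Lemma~\ref{lem:rhombic} and the known facet description: $f_0 = 2^n-2$ matches the vertex count $|2^{[n]}\setminus\{\emptyset,[n]\}|$, and $f_{n-2} = \binom{n}{n-2}(2^2-2) = n(n-1)$ matches the $n(n-1)$ facets coming from the inequalities $|x_i-x_j|\le 1$.
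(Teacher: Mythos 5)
Your argument is correct and is essentially the paper's own proof: the paper likewise uses the $(L,U)$-indexing of faces from Lemma~\ref{lem:rhombic}, notes that the face for $(L,U)$ has dimension $|U|-|L|$, and counts chains $\emptyset\subsetneq L\subseteq U\subsetneq[n]$ with $|U|-|L|=i$, which is exactly your $\binom{n}{i}(2^{n-i}-2)$ computation. You merely spell out the dimension verification in the quotient $\mathbb{R}^n/\mathbb{R}\mathbf{1}$ and the inclusion--exclusion of the two forbidden assignments, details the paper leaves implicit.
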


\begin{proof}
The face indexed by $(L,U)$ in the proof of Lemma
\ref{lem:rhombic} has dimension $|U| - |L| $.
Hence $f_i$ is the  number of chains
$\,\emptyset \subsetneq L \subseteq U \subsetneq [n]\,$
with $|U| - |L| = i$. This is the given number.
\end{proof}

\begin{example}[$n=4$]
We consider the discrete metric on $[4] = \{1,2,3,4\}$. 
The $3$-dimensional Lipschitz polytope $P_d$ is the rhombic dodecahedron with $f$-vector $(14,24,12)$. Its dual $P_d^*$ is the Wasserstein
ball with $f$-vector $(12,24,14)$. The normal fan of $P_d$, which is
the fan over $P_d^*$, is a central arrangement of four general planes in a $3$-dimensional space. This has $14$ regions.
\end{example}

\begin{corollary}  Up to a factor of 2, the Wasserstein distance between probability
	distributions on $[n]$ is the restriction of the $L_1$-distance on $\mathbb{R}^n$. In symbols $W_d =\frac{1}{2}
	\|\mu-\nu\|_{L_1}$ for $\mu,\nu\in\Delta_{n-1}$.
\end{corollary}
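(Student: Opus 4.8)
The plan is to show that for the discrete metric $d=(1,1,\ldots,1)$ on $[n]$, the Wasserstein distance $W_d(\mu,\nu)$ coincides with $\tfrac12\|\mu-\nu\|_{L_1}$. First I would unwind the definitions: by \eqref{eq:dual2}, $W_d(\mu,\nu)$ is the maximum of $\langle \mu-\nu, x\rangle$ over the Lipschitz polytope $P_d$, which for the discrete metric is the set $\{x\in\mathbb{R}^n/\mathbb{R}\mathbf 1 : |x_i-x_j|\le 1 \text{ for all } i\ne j\}$. By Lemma~\ref{lem:rhombic}, this is a polytope whose vertices are exactly the $0/1$ vectors $e_I=\sum_{i\in I}e_i$ with $\emptyset\subsetneq I\subsetneq[n]$. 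Since a linear functional on a polytope attains its maximum at a vertex, we get $W_d(\mu,\nu)=\max_{\emptyset\subsetneq I\subsetneq[n]}\langle \mu-\nu, e_I\rangle = \max_I \sum_{i\in I}(\mu_i-\nu_i)$.

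The remaining step is the elementary identity $\max_{I}\sum_{i\in I}(\mu_i-\nu_i)=\tfrac12\|\mu-\nu\|_{L_1}$ for $\mu,\nu\in\Delta_{n-1}$. Write $c=\mu-\nu$, so $\sum_i c_i=0$. The sum $\sum_{i\in I}c_i$ is maximized by taking $I=\{i: c_i>0\}$ (adding an index with $c_i\le 0$ never helps, dropping one with $c_i>0$ never helps); this gives $\sum_{i: c_i>0} c_i$. Because $\sum_i c_i=0$, we have $\sum_{i: c_i>0}c_i = -\sum_{i: c_i<0}c_i = \tfrac12\sum_i|c_i| = \tfrac12\|\mu-\nu\|_{L_1}$. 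One should check the chosen $I$ is nonempty and proper: if $\mu=\nu$ both sides are $0$ trivially, and if $\mu\ne\nu$ then $\{i:c_i>0\}$ is nonempty (some $c_i>0$) and proper (some $c_j<0$, since the $c_i$ sum to zero and are not all zero), so it indeed indexes a vertex of $P_d$.

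Combining the two steps yields $W_d(\mu,\nu)=\tfrac12\|\mu-\nu\|_{L_1}$, as claimed. I do not anticipate a real obstacle here; the only point requiring a little care is justifying that the maximizing set $I$ can be taken nonempty and proper (so that it corresponds to an honest vertex of $P_d$ rather than to $\emptyset$ or $[n]$), which is handled by the $\sum_i c_i=0$ constraint together with the degenerate case $\mu=\nu$. Everything else is a direct appeal to Lemma~\ref{lem:rhombic} and linear-programming duality as already recorded in \eqref{eq:ourproblem}.
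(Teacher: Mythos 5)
Your proof is correct and follows essentially the same route as the paper: both arguments reduce to maximizing the linear functional $\langle \mu-\nu,\cdot\rangle$ over an explicit vertex description of the Lipschitz polytope --- yours via the indicator vectors $e_I$ from Lemma~\ref{lem:rhombic} together with the zero-sum identity $\sum_{i:c_i>0}c_i=\tfrac12\|c\|_1$, the paper's via identifying the vertices of $2P_d$ with sign vectors in $\{-1,1\}^n$ (the projected cube $[-1,1]^n$), which is the same computation since $2e_I-\mathbf{1}$ pairs with the zero-sum vector $\mu-\nu$ exactly as $2e_I$ does. If anything, your write-up is cleaner about the factor of $2$ that the paper's displayed formula glosses over (``which we ignore''), and the check that $I=\{i:\mu_i>\nu_i\}$ is nonempty and proper when $\mu\neq\nu$ is a worthwhile detail the paper omits.
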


\begin{proof} Up to a factor of $2$, which we ignore,
$P_d$ is the image of the cube $[-1,1]^n$ under the map
$\mathbb{R}^n \rightarrow \mathbb{R}^n / \mathbb{R} \mathbf{1}$. 
 Hence its dual,
which is the $L_1$-ball or cross polytope, intersects the hyperplane
$\mathbf{1}^\perp$ in the Wasserstein ball $P_d^*$. This means that the
$L_1$-metric agrees with the Wasserstein metric on any translate of $\mathbf{1}^\perp$.
More explicitly, we compute $W_d(\mu,\nu)$ with the formula \eqref{eq:dual2}. 
This yields
$$  W_d(\mu,\nu)\,\,=\,\,\max_{x\in P_d}\langle\mu-\nu,x\rangle
\,\,=\,\,\langle\mu-\nu,\text{sign}(\mu-\nu)\rangle\,\,=\,\,\sum_{i=1}^n |\mu_i-\nu_i|. $$
Here we identify the linear functionals given by the vertices of $2 P_d$ 
 with elements in $\{-1,1\}^n$.
\end{proof}

\begin{example}
The $L_1$-ball for $n=3$ is an octahedron. The restriction of this octahedron to the triangle $\Delta_2$
is the hexagon on the left of \Cref{fig:wassersteinball}. 
\end{example}

We next examine the Lipschitz polytope $P_d$
for metrics associated to graphs $G$ other than $K_n$. 
The inequality representation was given in 
(\ref{eq: Lipschitz_reduced}). However, describing all faces,
or even just the vertex set $V(P_d)$,
is now more difficult than in Lemma~\ref{lem:rhombic}.
The Wasserstein ball $P_d^*$ is
the convex hull of the subset of vertices $e_i-e_j$ of
the  root polytope of type A that are indexed by edges of~$G$.
The following result for bipartite graphs $G$ is due to \cite[Lemma 4.5]{DDM}. A related characterization for weighted graphs was obtained in \cite[Theorem 2, \S 3.1]{montrucchio2019}.

\begin{proposition}\label{lem: bipartite}
	Let $d$ be a graph metric where $G$ is bipartite.
	The set of vertices of $P_d$ equals
\begin{equation}
	V(P_{d})\,\,=\,\,\{\,x\in \mathbb{Z}^n/\mathbb{Z}{\bf 1}\,:\, |x_i-x_j| = 1 
	\, \text{ for every edge } (i,j) \,\,{\rm of}\,\, G \,\}. 
	\label{eq: vertices_bipartite}
	\end{equation}
\end{proposition}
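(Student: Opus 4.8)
The plan is to prove the two inclusions in \eqref{eq: vertices_bipartite} separately. First I would show that every lattice point $x\in\mathbb{Z}^n/\mathbb{Z}\mathbf{1}$ satisfying $|x_i-x_j|=1$ on all edges of $G$ is a vertex of $P_d$. Such a point certainly lies in $P_d$ by \eqref{eq: Lipschitz_reduced}. To see it is a vertex, I would argue that the tight constraints $x_i-x_j=\pm1$ span the full space $\mathbb{R}^n/\mathbb{R}\mathbf{1}$: since $G$ is connected (as a graph metric on $[n]$ it must be), the edge vectors $e_i-e_j$ over all edges already span $\mathbb{R}^n/\mathbb{R}\mathbf{1}$, so at least $n-1=\dim P_d$ inequalities are active and the point is $0$-dimensional in $P_d$, hence a vertex. (If $G$ is merely assumed connected but not necessarily spanning-rank, connectedness is exactly what gives the spanning claim, via a spanning tree.)

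For the reverse inclusion, I would take an arbitrary vertex $x$ of $P_d$ and show its coordinates can be chosen integral with all edge-differences equal to $\pm1$. The natural tool is total unimodularity: the incidence structure of a bipartite graph gives a totally unimodular constraint matrix, so the polytope $\{x : -1\le x_i-x_j\le 1 \text{ for edges } (i,j)\}$ (before quotienting) has integral vertices once we pin down one coordinate, and more precisely every vertex is a $\{0,\pm1,\dots\}$-vector with all \emph{active} edge constraints tight. Concretely: at a vertex $x$ of $P_d$, the active constraints span $\mathbb{R}^n/\mathbb{R}\mathbf{1}$; the subgraph $H$ of edges on which $|x_i-x_j|=1$ must therefore be connected and spanning (otherwise the active edge vectors would not span). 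Bipartiteness then forces consistency: on $H$, fixing $x$ at one vertex propagates a unique integer value to every other vertex with differences $\pm1$, and bipartiteness guarantees there is no odd cycle creating a contradiction. Finally I would check that this integral candidate actually satisfies $|x_i-x_j|=1$ on \emph{all} edges of $G$, not just those of $H$: if some edge $(i,j)$ had $|x_i-x_j|=0$, then $x_i=x_j$ while $i,j$ lie in opposite parts of the bipartition and are joined by an $H$-path of odd length with steps $\pm1$, contradicting parity. Hence every vertex satisfies the right-hand side of \eqref{eq: vertices_bipartite}.

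The main obstacle I anticipate is making the ``integrality plus all-edges-tight'' step fully rigorous rather than hand-wavy. The total unimodularity of the vertex-edge incidence matrix of a bipartite graph is classical, but one has to be careful about the quotient by $\mathbb{R}\mathbf{1}$ and about the distinction between edges of $G$ and edges of the tight subgraph $H$. The cleanest route is probably: (i) lift to $\mathbb{R}^n$ and normalize $x_{i_0}=0$ for a fixed vertex $i_0$; (ii) invoke total unimodularity to conclude the vertex is integral; (iii) use the spanning/connectedness of the active subgraph together with a parity argument from bipartiteness to upgrade ``active edges are tight'' to ``all edges are tight.'' Citing \cite[Lemma 4.5]{DDM} lets me keep this brief, but the parity argument is the genuinely bipartite-specific ingredient and deserves to be spelled out, since it is exactly what fails for graphs with odd cycles (e.g.\ $K_n$, where midpoints of edges can appear and the description of Lemma~\ref{lem:rhombic} looks quite different).
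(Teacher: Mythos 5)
The paper does not prove this proposition at all: it is quoted with a citation to \cite[Lemma 4.5]{DDM}, so your argument has to stand on its own, and it does. Both inclusions are handled correctly. For the inclusion ``right-hand side $\subseteq V(P_d)$'' you correctly use that the edge constraints alone define $P_d$ (equation \eqref{eq: Lipschitz_reduced}) and that the active normals $e_i-e_j$ over all edges of the connected graph $G$ span $\mathbb{R}^n/\mathbb{R}\mathbf{1}$, which is exactly the vertex criterion. For the converse, your chain of steps is the right one: at a vertex the tight normals must span, so the tight subgraph $H$ is connected and spanning; normalizing a representative with $x_{i_0}=0$ and propagating along $H$-paths shows every coordinate is an integer; and the parity argument (an edge $(i,j)$ of $G$ joins the two sides of the bipartition, while any $H$-path from $i$ to $j$ has odd length and steps $\pm 1$, so $x_i-x_j$ is odd and cannot be $0$) upgrades tightness from $H$ to all of $G$. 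That parity step is indeed the only place bipartiteness is used, and you identify it as such.

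Two small clean-ups. First, total unimodularity is both unnecessary and slightly misattributed: the constraint matrix here is the directed incidence matrix with rows $\pm(e_i-e_j)$, which is totally unimodular for \emph{every} graph, so integrality of vertices is not the bipartite-specific phenomenon (the vertices $e_I$ of the rhombic-dodecahedron case $K_n$ in Lemma~\ref{lem:rhombic} are integral too); your propagation argument already gives integrality directly, so you can drop TU altogether. Second, there is no ``consistency'' issue in the propagation and no odd-cycle contradiction to worry about at that stage: $x$ is a given point, and propagation merely computes its coordinates; bipartiteness enters only in the final all-edges-tight step. With those adjustments the write-up would be a complete, self-contained proof of a statement the paper only cites.
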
	

\Cref{lem: bipartite} covers the case of the Lipschitz polytope for the $L_1$-norm on a product of finite sets. In particular, we obtain a vertex description for the Lipschitz polytope of the graph of the $k$-cube.
This covers the $L_0$-metric which is equal to the $L_1$-metric on the states of the $k$-bit models. 
This metric is the \emph{Hamming distance} on a cube. In \Cref{ex: discrete_metric_3}, we described this for the $2$-bit model, for which the Lipschitz polytope is an octahedron, and its dual is a cube.

 It is not easy to compute the cardinality of \eqref{eq: vertices_bipartite}. In graph theory, this corresponds to counting graph homomorphisms from the $k$-cube to the infinite path with a fixed point. 
 \cite{Gal} observed that there is a bijection between $V(P_d)$ and the proper $3$-colorings of $k$-cube with a vertex with fixed color. For $k=2,3,4,5,6$, the corresponding number equals
$\, 6,\, 38,\,990,\, 395094,\,33433683534$. This was computed with the graph coloring code in
{\tt SageMath}. We refer to \cite{Gal} for  asymptotics.

It follows from results in  \cite{JK} that the Wasserstein ball for the discrete metric on $[n]$ has
the most vertices for any metric on $[n]$.
We next discuss the Wasserstein ball with the fewest vertices.

\begin{example} \label{lem: L1 has a cube}
Let $d$ be the $L_1$-metric on $[n]$, i.e.~the graph metric of the $n$-path.
Then $\,P_d \, = \, \{ |x_i - x_{i+1}| \leq 1 \,: \,i=1,2,\ldots,n-1 \}\,$ is
	combinatorially an $(n-1)$-cube, and 	$P_d$ is a cross polytope.
	This has the minimum number of vertices for any centrally symmetric $(n-1)$-polytope:
	\[ \qquad f_i(P_d)\,=\, f_{n-i-2}(P_d^*)\,=\,2^{n-i-1}\binom{n-1}{i} \qquad
	\hbox{for}\,\,\, i=0,1,\dots,n-2.	\]
\end{example}

We conclude this section with four independence models that serve as examples 
for our case studies in the next sections. The tuple $(({m_1})_{d_1},\dots , ({m_k})_{d_k})$
denotes the independence model with  $n = \prod_{i=1}^k \binom{m_i+d_i-1}{d_i}$ states
where the $i$th entry $(m_i)_{d_i}$ refers to a multinomial distribution with $m_i$ possible outcomes and $d_i$ trials.
This can be interpreted as an unordered set of $d_i$ identically distributed random variables on $[m_i]=\{1,2,...,m_i\}$. The subscript $d_i$ is omitted if $d_i = 1$.

For example, $(2_2,2)$ denotes the independence model for
three binary random variables where the first two are identically distributed.
We list the $n=6$ states in the order $00, 10, 20, 01, 11, 21$.
These are the vertices of the associated graph $G$, which is the product of a $3$-chain and a $2$-chain.
This  model $\mathcal{M}$ is the image of the map from the square $[0,1]^2$
into the simplex $\Delta_5$ given by
\begin{equation}
\label{eq:zweizweizwei}
(p,q) \,\mapsto \, \bigl(\, p^2q,\,2p(1-p)q, \,(1-p)^2q,\,p^2(1-q),\,2p(1-p)(1-q),\,(1-p)^2(1-q)\, \bigr).
\end{equation}

\begin{example}\label{ex: running_example}
Our four models are:
 the $3$-bit model $(2,2,2)$ with the $L_0$-metric on $[2]^3$;
 the model $(3,3)$ for two ternary variables with the $L_1$-metric on $[3]^2$;
the model $(2_6)$ for six   identically distributed binary variables with the discrete metric on $[7]$; 
the model  $(2_2,2)$ in (\ref{eq:zweizweizwei}) with the $L_1$-metric on $[3]\times[2]$. In \Cref{table_experiments_combinatorics}, we report the $f$-vectors of the corresponding Wasserstein balls.

\vspace{5pt}
\begin{table}[h!]
	\begin{center}
		\vspace{-0.14in}
		\renewcommand*{\arraystretch}{1.2}
		\begin{tabular}{ | l | l | l |  l |  l |}
			\hline
			$\mathcal{M}$ & $n$ & $\dim(\mathcal{M})$ & Metric $d$ & $f$-vector of the 
			$(n{-}1)$-polytope $P_d^*$ \\ \hline
			$(2,2,2)$ & 8 & 3 & $L_0=L_1$ & $(24, 192, 652, 1062, 848, 306, 38) $ \\ 
			$(3,3)$ & 9 & 4 & $L_1$ & $(24, 216, 960, 2298, 3048, 2172, 736, 82)$ \\ 
			$(2_6) $ & 7 & 1 & discrete&  $(42, 210, 490, 630, 434, 126)$ \\ 
			$(2_2,2)$ & 6 & 2 & $L_1$ & $(14,60,102,72,18)$  \\\hline
		\end{tabular}
		\vspace{-0.14in}
	\end{center}
	\caption{$f$-vectors of the Wasserstein balls for the four models in \Cref{ex: running_example}.}
	\label{table_experiments_combinatorics}
\end{table}	
\end{example}

\section{Polar Degrees of Independence Models}       \label{sec5}

In this section, we examine the problem
(\ref{eq:generalopt}) for fixed type $F$ from the perspective of algebraic geometry.
Given a compact smooth algebraic variety $\mathcal{M}$
in $\mathbb{R}^m$, we consider a linear functional $\ell$
and  an affine-linear space $L$ of dimension $r$ in $\mathbb{R}^m$.
It is assumed that the pair $(\ell,L)$ is in general position 
relative to $\mathcal{M}$.  Our aim is to study the following optimization problem:
\begin{equation}
\label{eq:generalopt2}
\hbox{Minimize the linear functional} 
\,\,\,\ell\, \,\,  \hbox{over the intersection}
\,\, L \,\cap \, \mathcal{M} \,\, {\rm in} \,\, \mathbb{R}^m.
\end{equation}
This is a constrained optimization problem. We
write the critical equations as a system of polynomial equations.
Its unknowns are the $m$ coordinates of $\mathbb{R}^m$
plus various Lagrange multipliers. The genericity assumption allows us
to attach an algebraic degree to this optimization problem.
That degree is the number of complex solutions
to the critical equations. Assuming $(\ell,L)$ to be generic,
this number does not depend on the choice of $(\ell, L)$ but just on
the dimension $r$ of $L$.
The following result furnishes a  recipe for assessing
the algebraic complexity of our problem.

\begin{theorem} \label{thm:algdeg}
	The algebraic degree of the problem
	(\ref{eq:generalopt2}) is the polar degree $\delta_r$ of $\mathcal{M}$.
\end{theorem}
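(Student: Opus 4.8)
The plan is to recognize the optimization problem \eqref{eq:generalopt2} as a classical instance of the theory of \emph{polar varieties} and to count critical points via the polar degree. Recall that, for a variety $\mathcal{M} \subset \mathbb{R}^m$ (or its projective/complex closure), the polar degrees $\delta_0,\delta_1,\dots,\delta_{\dim\mathcal{M}}$ record the degrees of the polar classes, and that $\delta_r$ has the following enumerative interpretation: it is the number of points $v \in \mathcal{M}$ for which the embedded tangent space $T_v\mathcal{M}$ meets a generic linear subspace of the complementary-plus-$r$ dimension in a line through a fixed generic point — equivalently (passing to the dual picture), the number of $v \in \mathcal{M}$ at which a generic linear form restricted to a generic $r$-dimensional affine slice is critical. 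I would state this as the working definition of $\delta_r$, citing the standard references (Piene, Holme, and the algebraic-statistics treatment in \cite{DHOST}).

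First I would write down the Lagrange/critical equations for \eqref{eq:generalopt2}. Choosing affine coordinates so that $L$ is cut out by $m-r$ generic affine-linear equations and $\mathcal{M}$ locally by equations $g_1=\dots=g_{m-\dim\mathcal{M}}=0$, the point $v$ is critical for $\ell|_{L\cap\mathcal{M}}$ precisely when $d\ell_v$ lies in the span of the conormal directions of $\mathcal{M}$ at $v$ together with the (constant) conormal directions of $L$; this is a system in the $m$ coordinates plus Lagrange multipliers. The key structural observation is that this condition is exactly the incidence condition defining the relevant polar variety: $\ell$ and the linear span $L$ together play the role of the generic flag data in the definition of $\delta_r$. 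So the critical locus of \eqref{eq:generalopt2}, for generic $(\ell,L)$, is a finite scheme whose length is by definition $\delta_r$.

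The steps in order: (i) pass to complex projective closures and invoke the genericity hypothesis on $(\ell,L)$ so that Bertini-type transversality applies — the intersection $L\cap\mathcal{M}$ is smooth of dimension $r + \dim\mathcal{M} - m$ (assume this is $\geq 0$, else the problem is vacuous), $\ell$ restricts to a Morse function on it, and no critical points escape to the boundary or to the singular locus of the projective closure since $\mathcal{M}$ is smooth and compact; (ii) identify the critical equations with the defining incidence of the $r$-th polar variety of $\mathcal{M}$ with respect to the flag determined by $\ell$ and $L$; (iii) conclude that the number of critical points equals $\deg$ of that polar class, namely $\delta_r$; (iv) note that, by the genericity in the hypothesis, all $\delta_r$ complex critical points are isolated and the count is independent of the particular generic $(\ell,L)$, so the \emph{algebraic degree} of \eqref{eq:generalopt2} is well defined and equals $\delta_r$.

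The main obstacle I anticipate is step (ii): making the dictionary between the ad hoc Lagrange-multiplier system and the intrinsic polar-variety incidence conditions fully precise, including checking that the dimension bookkeeping matches (the $r$ in $\delta_r$ should come out to be exactly $\dim L$) and that the genericity of $(\ell,L)$ in the sense used here is strong enough to guarantee the transversality needed for the enumerative count to be exact rather than just an upper bound. A clean way around this is to reduce to the already-understood Euclidean-distance-degree picture of \cite{DHOST}: there, the critical equations for a generic linear functional on a generic linear slice are shown to be governed by polar degrees, and \eqref{eq:generalopt2} is formally the same problem once one forgets the specific origin of $\ell$ and $L$. I would therefore phrase the proof as a direct appeal to (the relevant statement of) \cite{DHOST}, supplying only the verification that our $(\ell,L)$ satisfies the required general-position assumptions.
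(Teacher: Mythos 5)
Your proposal is correct and follows essentially the same route as the paper: both arguments reduce the count of critical points of a generic linear functional on a generic linear slice to the known enumerative interpretation of the polar degrees as multidegrees of the conormal variety $CV(\mathcal{M})$, invoking Bertini-type transversality and appealing to \cite[\S 5]{DHOST} for precisely the dictionary you single out as your step (ii). The only caveat is indexing: with the convention (\ref{eq:cohomclass}), a slice $L$ of dimension $r$ yields the polar degree $\delta_{r-1}$ (as in the paper's own proof and in Proposition \ref{prop:upperbound}), so you should check that your working definition of $\delta_r$ matches this shift.
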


We begin by explaining this statement. First of all,
we already tacitly replaced $\mathcal{M}$ by
its closure in complex projective space $\mathbb{P}^m$,
and we are assuming that this projective variety is smooth.
Let $(\mathbb{P}^m)^\vee$ denote the dual projective space
whose points are the hyperplanes $h$ in $\mathbb{P}^m$.
The {\em conormal variety} of the model $\mathcal{M}$ is the following 
subvariety in the product of two projective spaces:
$$ CV(\mathcal{M}) \quad = \quad
\bigl\{ \,(x,h) \in \mathbb{P}^m \times (\mathbb{P}^m)^\vee \,\,:\,\,
\hbox{the point $ x$ lies in $ \mathcal{M} $ and $h$ is tangent to $\mathcal{M}$ at $x$}\, \bigr\}. $$
The importance of the conormal variety for optimization has been explained in several sources,
including \cite{DHOST, NRS, RS}. The projection of $CV(\mathcal{M})$ onto 
the second factor $(\mathbb{P}^m)^\vee $ is the {\em dual variety} $\mathcal{M}^*$,
which parametrizes hyperplanes that are tangent to $\mathcal{M}$.
It is known that $CV(\mathcal{M}^*) = CV(\mathcal{M})$ and
that this conormal variety always has dimension $m-1$;
see \cite[Proposition~2.4 and Theorem~2.6]{RS}.
The dual variety already appeared in \cite[\S 4]{macis},
but here we need a more general approach.

Let $[CV(\mathcal{M})]$ denote the class of the conormal variety in
the  cohomology of $\mathbb{P}^m \times (\mathbb{P}^m)^\vee $.
This  cohomology ring is $\mathbb{Z}[s,t]/\langle s^{m+1}, t^{m+1} \rangle$, 
and hence the class
$[CV(\mathcal{M})]$ is a homogeneous polynomial of
degree $m+1$ in two unknowns $s$ and $t$. We can write this binary form as follows:
\begin{equation}
\label{eq:cohomclass}
[CV(\mathcal{M})] \quad = \quad \sum_{r=1}^{m} \,\delta_{r-1} \cdot s^r t^{m+1-r} . 
\end{equation}
The coefficients $\delta_0,\delta_1,\delta_2, \ldots$ are  the
{\em polar degrees} of the model~$\mathcal{M}$. 
Some of these are zero.
Namely, the sum in
(\ref{eq:cohomclass}) ranges from $r_1 $ to $r_2 $, where ${\rm dim}(\mathcal{M}) = m-r_1$
and ${\rm dim}(\mathcal{M}^*) = r_2$. The first and last non-zero coefficients are
$\delta_{r_1-1} = {\rm degree}(\mathcal{M})$ and $\delta_{r_2-1} = {\rm degree}(\mathcal{M}^*)$
respectively.

\begin{proof}[Proof of Theorem \ref{thm:algdeg}]
	It is known that $\delta_{r-1}$ equals the number of points
	in $(L_r \times L'_{m+1-r}) \cap CV(\mathcal{M})$
	where $L_r \subset \mathbb{P}^m$ is a general
	linear space of dimension $r$ and 
	$L'_{m+1-r} \subset (\mathbb{P}^m)^\vee$
	is a general linear space of dimension $m{+}1{-}r$; see e.g.~\cite[\S 5]{DHOST}.
	We now identify $L_r$ with the linear space $L$ in 
	(\ref{eq:generalopt2}). The intersection
	$(L_r \times (\mathbb{P}^m)^\vee) \,\cap \,CV(\mathcal{M})$
	is a smooth variety of dimension $r-1$ by Bertini's Theorem.
	In (\ref{eq:generalopt2}), we  optimize a general linear functional
	over its projection into the first factor $\mathbb{P}^m$. The dual variety to that projection
	lives in $(\mathbb{P}^m)^\vee$, and 
	the desired algebraic degree is the degree of the dual 
	variety. This is obtained geometrically by intersecting with $L'_{m+1-r}$.
	\end{proof}

The independence models treated in this article  are known in algebraic geometry as 
Segre-Veronese varieties.  The study of characteristic classes  for these families 
is a classical subject in algebraic geometry. The explicit computation of these polar degrees 
was carried out only recently,  in the doctoral dissertation \cite{Luca}.  
The result is described in Theorem \ref{prop:luca} below.

Let $\mathcal{M}$ be the model denoted $(({m_1})_{d_1},\dots , ({m_k})_{d_k})$  
in Section \ref{sec4}. The corresponding  Segre-Veronese variety is the embedding of
$\mathbb{P}^{m_1-1}\times \dots \times \mathbb{P}^{m_k-1}$ in the space 
of partially symmetric tensors, 
$\,\mathbb{P}({\rm Sym}_{d_1}\mathbb{R}^{m_1}\otimes \dots \otimes {\rm Sym}_{d_k} \mathbb{R}^{m_k} )$.
That projective space equals $\mathbb{P}^{n-1}$ where  $n = \prod_{i=1}^k \binom{m_i+d_i-1}{d_i}$.
We identify its real nonnegative points with the simplex $\Delta_{n-1}$.
The independence model $\mathcal{M}$ consists of the rank one tensors.
Its dimension is denoted  $\,{\bf m}:=(m_1-1)+ \dots + (m_k-1)$. 
 The following  formula for the polar degrees of the Segre-Veronese variety
 $\mathcal{M}$ appears in \cite[Chapter~5]{Luca}.

\begin{theorem} \label{prop:luca}
 For each integer $r$ with $n-1-{\rm{dim}} (\mathcal{M}) \leq r \leq \dim(\mathcal{M^*})$,
  the polar degree equals
\begin{equation}\label{LucaForm}
\delta_{r-1}(\mathcal{M})
\,\,\, =\,\,\sum_{s=0}^{{\bf m}-n+1+r}(-1)^s\binom{{\bf m}-s+1}{n-r}({\bf m}-s)!\left( \sum_{i_1+\dots+ i_k=s} \prod_{l=1}^k \frac{\binom{m_l}{i_l}d_l^{m_l-1-i_l}}{(m_l-1-i_l)!} \right).
\end{equation}
\end{theorem}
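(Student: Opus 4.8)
The plan is to compute the class $[CV(\mathcal M)]$ in the cohomology ring $\mathbb Z[s,t]/\langle s^{n},t^{n}\rangle$ of $\mathbb P^{n-1}\times(\mathbb P^{n-1})^\vee$ by pushing forward the relevant Chern-class computation from the abstract variety $X=\mathbb P^{m_1-1}\times\cdots\times\mathbb P^{m_k-1}$, and then to read off the coefficients $\delta_{r-1}$ from the expansion \eqref{eq:cohomclass}. Concretely, the conormal variety $CV(\mathcal M)$ is the projectivization of the conormal bundle of the Segre-Veronese embedding $\iota\colon X\hookrightarrow\mathbb P^{n-1}$ associated to the line bundle $\mathcal O_X(d_1,\dots,d_k)$ (whose global sections give $\mathrm{Sym}_{d_1}\mathbb R^{m_1}\otimes\cdots\otimes\mathrm{Sym}_{d_k}\mathbb R^{m_k}$). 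So the first step is to recall the standard formula expressing $[CV(\mathcal M)]$ as a push-forward to $\mathbb P^{n-1}\times(\mathbb P^{n-1})^\vee$ of a product of Chern classes of the jet/principal-parts bundle $\mathcal P^1(\mathcal O_X(d_1,\dots,d_k))$ on $X$ — equivalently, to use the classical fact (see e.g.~the references \cite{DHOST,RS}) that the polar degrees are computed by integrating Chern classes of $X$ against powers of the hyperplane class $h=c_1(\mathcal O_X(d_1,\dots,d_k))$, via the ``ranks of the polar loci'' formula
\[
\delta_{r-1}(\mathcal M)\;=\;\sum_{j}(-1)^{j}\binom{\dim X-j+1}{n-r}\int_X c_j(X)\,h^{\dim X-j}.
\]
This reduces the problem to a purely combinatorial computation on a product of projective spaces.

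The second step is to evaluate $\int_X c_j(X)\,h^{\dim X-j}$ explicitly. Here $\dim X={\bf m}=\sum(m_l-1)$, the total Chern class factors as $c(X)=\prod_{l=1}^k c(\mathbb P^{m_l-1})=\prod_{l=1}^k(1+h_l)^{m_l}$ where $h_l$ is the hyperplane class on the $l$th factor (using $h_l^{m_l}=0$), and $h=\sum_l d_l h_l$. Expanding $c_j(X)=\sum_{i_1+\cdots+i_k=j}\prod_l\binom{m_l}{i_l}h_l^{i_l}$ and $h^{{\bf m}-j}$ by the multinomial theorem, and then using $\int_X\prod_l h_l^{m_l-1}=1$ with all other monomials integrating to zero, one picks out the coefficient of $\prod_l h_l^{m_l-1}$. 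A short bookkeeping computation shows the exponent of $h_l$ contributed by $h^{{\bf m}-j}$ must be $m_l-1-i_l$, which forces the power $d_l^{m_l-1-i_l}$ and the multinomial coefficient $({\bf m}-j)!/\prod_l(m_l-1-i_l)!$ to appear; combining these with the $\binom{m_l}{i_l}$ from $c_j(X)$ yields exactly the inner sum
\[
\int_X c_j(X)\,h^{{\bf m}-j}\;=\;({\bf m}-j)!\sum_{i_1+\cdots+i_k=j}\prod_{l=1}^k\frac{\binom{m_l}{i_l}\,d_l^{m_l-1-i_l}}{(m_l-1-i_l)!}.
\]
Substituting this into the polar-degree formula above and renaming $j=s$ gives \eqref{LucaForm}, with the summation range $0\le s\le {\bf m}-n+1+r$ coming from the binomial $\binom{{\bf m}-s+1}{n-r}$ vanishing once ${\bf m}-s+1<n-r$, together with the constraint that $r$ lies between $n-1-\dim(\mathcal M)$ and $\dim(\mathcal M^*)$ so that $\delta_{r-1}$ is among the genuinely nonzero polar degrees (cf.~the remark after \eqref{eq:cohomclass}).

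The main obstacle is making the first step precise: one must justify that the polar degrees of the embedded variety $\iota(X)\subset\mathbb P^{n-1}$ — equivalently the coefficients of $[CV(\iota(X))]$ — are given by that alternating sum of Chern-number integrals, rather than rederiving it from scratch. This is where I would lean on \cite{Luca}, in which exactly this computation for Segre-Veronese varieties is carried out in Chapter~5; the cleanest route is to cite the projection-formula/Chern-class identity for conormal classes of an embedding by a very ample line bundle (the ``class of the dual variety'' formula) and then do only the bookkeeping in step two, which is routine multinomial algebra. One secondary subtlety worth checking is that $\iota(X)$ is smooth (so Theorem \ref{thm:algdeg} applies and the polar degrees are well-defined), which holds because the Segre-Veronese embedding is an embedding of the smooth variety $X$; and that the stated range of $r$ matches $r_1=n-\dim\mathcal M$, $r_2=\dim\mathcal M^*$ from the discussion preceding the theorem, so that no spurious or missing terms appear.
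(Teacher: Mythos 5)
Your proposal is essentially correct, but note that the paper itself contains no proof of Theorem \ref{prop:luca}: the formula is quoted from \cite[Chapter~5]{Luca}, so what you supply is the derivation that the citation stands in for, rather than an alternative to an internal argument. Your two steps are sound. Step (i) is the classical expression of the polar degrees (ranks of the conormal class) of a smooth ${\bf m}$-dimensional $X\subset\mathbb{P}^{n-1}$,
\[
\delta_{r-1}\;=\;\sum_{s\ge 0}(-1)^s\binom{{\bf m}-s+1}{n-r}\int_X c_s(X)\,h^{{\bf m}-s},
\]
which goes back to Todd and Piene and should be cited from the polar-classes literature; leaning on \cite{Luca} for this step, as you propose as a fallback, would be circular, since that is precisely the result being established. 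Step (ii), the bookkeeping on $X=\mathbb{P}^{m_1-1}\times\cdots\times\mathbb{P}^{m_k-1}$ with $c(X)=\prod_l(1+h_l)^{m_l}$ and $h=\sum_l d_l h_l$, is correct: picking out the coefficient of $\prod_l h_l^{m_l-1}$ forces the exponents $m_l-1-i_l$ and gives $\int_X c_s(X)h^{{\bf m}-s}=({\bf m}-s)!\sum_{i_1+\cdots+i_k=s}\prod_l\binom{m_l}{i_l}d_l^{\,m_l-1-i_l}/(m_l-1-i_l)!$, and substituting yields \eqref{LucaForm}, with the truncation at $s={\bf m}-n+1+r$ coming from the vanishing binomial exactly as you say. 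This reproduces the tabulated values, e.g.\ $(2,2,2)$ for $\mathbb{P}^1\times\mathbb{P}^1\subset\mathbb{P}^3$ and $(6,12,12,4)$ for the $3$-bit model, so the computation checks out. Two minor slips: with the paper's convention $m=n-1$, the first nonzero index is $r_1=n-1-\dim(\mathcal{M})$, not $n-\dim(\mathcal{M})$; and the smoothness that matters is that of the projective Segre--Veronese variety (automatic, as you note), which is needed for the polar-class formula itself, not via Theorem \ref{thm:algdeg}, which consumes the polar degrees rather than defining them. What your route buys is a self-contained proof, modulo one classical citation, making the statement independent of the unpublished thesis; what the paper's route buys is brevity, delegating both the conormal-class formula and the multinomial evaluation to \cite{Luca}.
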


We next examine this formula for various special cases, starting with the binary case.

\begin{corollary}
Let $\mathcal{M}$ be the $k$-bit independence model.
The formula (\ref{LucaForm}) specializes to
\begin{equation}\label{polarDeg1}
\delta_{r-1}(\mathcal{M}) \,\,\,=\sum_{s=0}^{k-2^k+1+r} \!\! (-1)^s\binom{k+1-s}{2^k-r}(k-s)!\,2^s\binom{k}{s}.
\end{equation}
\end{corollary}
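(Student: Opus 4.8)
The plan is to specialize the general Segre-Veronese polar degree formula (\ref{LucaForm}) to the $k$-bit case by setting $k_{\text{SV}} = k$ factors, $m_l = 2$ and $d_l = 1$ for every $l \in \{1,\ldots,k\}$, and then simplifying each of the combinatorial ingredients. First I would record the two easy substitutions: since each $m_l = 2$ we have $\dim(\mathcal{M}) = \mathbf{m} = (2-1) + \cdots + (2-1) = k$, and since $n = \prod_{l=1}^k \binom{m_l + d_l - 1}{d_l} = \prod_{l=1}^k \binom{2}{1} = 2^k$, the binomial $\binom{\mathbf{m} - s + 1}{n - r}$ in (\ref{LucaForm}) becomes $\binom{k+1-s}{2^k - r}$, and the factorial $(\mathbf{m}-s)!$ becomes $(k-s)!$. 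The outer summation range $s = 0,\ldots,\mathbf{m} - n + 1 + r$ becomes $s = 0,\ldots, k - 2^k + 1 + r$, exactly as in (\ref{polarDeg1}).

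The only nontrivial step is the inner sum $\sum_{i_1 + \cdots + i_k = s} \prod_{l=1}^k \frac{\binom{m_l}{i_l} d_l^{m_l - 1 - i_l}}{(m_l - 1 - i_l)!}$. With $m_l = 2$ and $d_l = 1$, the $l$-th factor is $\frac{\binom{2}{i_l} \cdot 1^{1 - i_l}}{(1 - i_l)!}$, which is only defined (nonzero) when $i_l \in \{0,1\}$: for $i_l = 0$ it equals $\frac{\binom{2}{0} \cdot 1}{1!} = 1$, and for $i_l = 1$ it equals $\frac{\binom{2}{1} \cdot 1}{0!} = 2$. Hence each term in the inner sum with $i_1 + \cdots + i_k = s$ is a product of $(k-s)$ ones and $s$ twos, i.e.\ equals $2^s$, and there are $\binom{k}{s}$ ways to choose which of the $k$ indices equal $1$. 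Therefore the inner sum collapses to $2^s \binom{k}{s}$. Substituting this back into (\ref{LucaForm}) yields precisely (\ref{polarDeg1}).

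I expect the main (and essentially only) obstacle is being careful about the convention for the factorial $(m_l - 1 - i_l)!$ when $i_l > m_l - 1$: one must argue that such terms contribute zero, so that the multi-index sum $\sum_{i_1 + \cdots + i_k = s}$ effectively ranges only over $0/1$-vectors, which is exactly what forces the binomial coefficient $\binom{k}{s}$ to appear. Once that point is made, everything else is direct substitution and no real computation is needed; the statement about the valid range of $r$, namely $n - 1 - \dim(\mathcal{M}) \leq r \leq \dim(\mathcal{M}^*)$, is inherited verbatim from Theorem~\ref{prop:luca} with the numerical values $n = 2^k$ and $\dim(\mathcal{M}) = k$ plugged in, and need not be re-derived.
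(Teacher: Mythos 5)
Your proposal is correct and follows exactly the route the paper intends: the corollary is obtained by direct substitution $m_l=2$, $d_l=1$ into (\ref{LucaForm}), with the inner multi-index sum collapsing to $2^s\binom{k}{s}$ because each factor equals $1$ or $2$ according as $i_l=0$ or $i_l=1$ (and terms with $i_l\geq 2$ vanish by the reciprocal-factorial convention). The paper states this without written proof, and your argument, including the careful handling of the $(m_l-1-i_l)!$ convention, supplies precisely that computation.
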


The polar degrees  in (\ref{polarDeg1}) are
shown for $k \leq 7$ in  Table \ref{tab:dims}.
The indices $r$ with $\delta_{r-1} \neq 0$ range from
$ \,{\rm codim}(\mathcal{M}) = 2^k-1-k\,$
to $\,{\rm dim}(\mathcal{M}^*) = 2^k-1$.
  For the sake of the table's layout, we shift the indices so that the row labeled with $0$ contains $\,\delta_{\rm{codim}(\mathcal{M})-1} = {\rm degree}(\mathcal{M}) = k! $.
  The dual variety $\mathcal{M}^*$ is a hypersurface of
  degree $\delta_{2^k-2}$ known 
  as the {\em hyperdeterminant} of  format $2^k$.
  For instance, for $k=3$, this hypersurface in $ \mathbb{P}^7$  is the $2 \times 2 \times 2$-hyperdeterminant which 
has degree four.
  
\begin{small}
\begin{table}[h!]\label{kBitPolar}
	\centering
	\renewcommand*{\arraystretch}{1.2}
	\begin{tabular}{| c | c | c | c | c| c|c|}
		\hline
		$r-\rm{codim}({\mathcal{M}})$ & $k=2$ & $k=3$ &$k=4$& $k=5$& $k=6$ & $k=7$ \\ \hline		
		0           & 2 & 6  & 24  &120 &720 & 5040 \\ \hline
		1        & 2 & 12&72&480 & 3600 & 30240 \\ \hline
		2        & 2 & 12&96& 840&7920 & 80640 \\ \hline
		3      &   & 4&64&  800&9840 & 124320\\ \hline
		4      &   &   &24& 440& 7440 & 120960 \\ \hline
		5      &  &   &   &   128& 3408 & 75936 \\ \hline
		6       &  &   & & & 880 & 30016 \\ \hline		
		7       &  &   & & &  & 6816 \\ \hline
	\end{tabular}
	\vspace{5pt}
	\caption{The polar degrees $\delta_{r-1}(\mathcal{M})$ of the $k$-bit independence model for $k\leq 7$.} \label{tab:dims} 
\end{table}
\end{small}

We next discuss the independence models
$(m_1, m_2)$ for two random variables. These are the classical
contingency tables of format $m_1 \times m_2$.
Here, $n=m_1m_2$ and ${\bf m}=m_1+m_2-2$. 
The ${\bf m}$-dimensional Segre variety $\mathcal{M} = \mathbb{P}^{m_1-1} \times \mathbb{P}^{m_2-1} 
\subset \mathbb{P}^{n-1}$ consists of $m_1 \times m_2$ matrices of rank~one.

\begin{corollary} 
The Segre variety of  $m_1 \times m_2$ matrices of rank one has the polar degrees
\begin{equation}
\label{eq:matrixdelta}
\delta_{r-1}(\mathcal{M}) \,\,\,=\,\, \sum_{s=0}^{{\bf m}-n+1+r}(-1)^s\binom{{\bf m}-s+1}{n-r}({\bf m}-s)!\left(\sum_{i+j=s} \frac{\binom{m_1}{i}}{(m_1-1-i)!}\cdot\frac{\binom{m_2}{j}}{(m_2-1-j)!}  \right).
\end{equation}
\end{corollary}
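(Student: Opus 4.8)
The plan is to derive \eqref{eq:matrixdelta} as the special case of Theorem~\ref{prop:luca} in which $k=2$ and $d_1=d_2=1$. The Segre variety of $m_1\times m_2$ matrices of rank one is exactly the independence model $\mathcal{M}$ denoted $(m_1,m_2)$ in the notation of Section~\ref{sec4}, so this substitution is legitimate. With $d_1=d_2=1$ one computes $n=\prod_{i=1}^{2}\binom{m_i+1-1}{1}=m_1m_2$ and ${\bf m}=(m_1-1)+(m_2-1)=m_1+m_2-2$, which are precisely the quantities appearing in the statement; hence it remains only to specialize the right-hand side of \eqref{LucaForm}.

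First I would simplify the inner product over $l$. Since each $d_l=1$, the power $d_l^{m_l-1-i_l}$ equals $1$ for every exponent occurring, so
\[
\prod_{l=1}^{2}\frac{\binom{m_l}{i_l}\,d_l^{m_l-1-i_l}}{(m_l-1-i_l)!}\;=\;\frac{\binom{m_1}{i_1}}{(m_1-1-i_1)!}\cdot\frac{\binom{m_2}{i_2}}{(m_2-1-i_2)!}.
\]
Next, for $k=2$ the multi-index sum $\sum_{i_1+\dots+i_k=s}$ in \eqref{LucaForm} is just $\sum_{i_1+i_2=s}$; relabeling $(i_1,i_2)\mapsto(i,j)$ turns it into the inner sum displayed in \eqref{eq:matrixdelta}. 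The outer alternating sum $\sum_{s=0}^{{\bf m}-n+1+r}(-1)^s\binom{{\bf m}-s+1}{n-r}({\bf m}-s)!$ is carried over verbatim, and the claimed formula follows term by term.

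The only item requiring care---rather than a genuine obstacle---is the bookkeeping of indices and conventions. The admissible range $n-1-\dim(\mathcal{M})\le r\le\dim(\mathcal{M}^*)$ of Theorem~\ref{prop:luca} becomes, using $\dim(\mathcal{M})={\bf m}=m_1+m_2-2$, exactly the range over which \eqref{eq:matrixdelta} is asserted. Within it, each factor $\binom{m_l}{i_l}/(m_l-1-i_l)!$ is read with the same convention as in the parent formula: it vanishes once $i_l\ge m_l$ (either because $\binom{m_l}{i_l}=0$ or because the factorial of a negative integer is treated as infinite), so every inner sum is finite and well defined, and no new convergence or degeneracy issues arise. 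Thus the corollary is immediate once the substitution $d_1=d_2=1$ is made; alternatively one could check \eqref{eq:matrixdelta} directly against the classically known polar degrees of $\mathbb{P}^{m_1-1}\times\mathbb{P}^{m_2-1}$, but the specialization route is the shortest.
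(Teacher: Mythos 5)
Your proposal is correct and matches the paper's (implicit) argument: the corollary is stated there as the direct specialization of Theorem~\ref{prop:luca} to $k=2$, $d_1=d_2=1$, using $n=m_1m_2$, ${\bf m}=m_1+m_2-2$, and $d_l^{m_l-1-i_l}=1$, exactly as you do.
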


\begin{small}
\begin{table}[h!]
\label{nAryMayModel}
	\footnotesize
	\centering
	\renewcommand*{\arraystretch}{1.2}
	\begin{tabular}{| c | c | c | c | c| c| c| c| c| c|c| c|}
		\hline
		$r-\rm{codim}({\mathcal{M})}$	 & $(2,3)$ & $(2,4)$ & $(2,5)$&  $(2,6)$ & $(3,3)$& $(3,4)$ & $(3,5)$ & $(3,6)$ & $(4,4)$ & $(4,5)$ & $(4,6)$ \\  
		 \hline 		
		0                       & 3 & 4& 5 & 6& 6  & 10  & 15  &21 & 20 & 35 & 56 \\ \hline
		1                        & 4 &  6& 8& 10&12 & 24  & 40  &60 & 60& 120 & 210 \\ \hline
		2                      & 3 &  4& 5& 6 &12 & 27  & 48  & 75 & 84& 190 & 360 \\ \hline
		3                     &  & & &&6 & 16 & 30  & 48 & 68& 176 & 360 \\ \hline
		4                     & & & & &3 & 6  & 10  &15 & 36& 105 & 228 \\ \hline
		5                      &  &   & &&  &   &   & &  12& 40 & 90 \\ \hline
		6                     &  &  & & & &   &   & &  4& 10 &  20\\ \hline
	\end{tabular}
	\vspace{5pt}
	\caption{The polar degrees $\delta_{r-1}(\mathcal{M})$ of the independence model $(m_1,m_2)$.} 
\label{tab:dims2}
	\end{table}
\end{small}

The polar degrees (\ref{eq:matrixdelta}) are shown in Table \ref{tab:dims2},
with the labeling convention as in Table \ref{tab:dims}.
We now apply the discussion of polar degrees to our optimization problem
for independence models.  Given a fixed model $\mathcal{M}$, the equality in
Theorem~\ref{thm:algdeg} holds only when the data 
$(\ell,L)$ in (\ref{eq:generalopt2}) is generic. However, for the
Wasserstein distance problem stated in (\ref{eq:generalopt}),
the linear space $L=L_F$ and the linear functional $\ell= \ell_F$
are very specific. They depend on the Lipschitz polytope $P_d$
and the type $F$ of the optimal solution $\nu^*$. 
For such specific scenarios, we only get an inequality.

\begin{proposition} \label{prop:upperbound}
Consider the distance optimization problem (\ref{eq:generalopt})  for the  independence model  $(({m_1})_{d_1},\dots , ({m_k})_{d_k})$ on
a given face $F$ of the Wasserstein ball $P_d^*$.
The degree of the optimal solution $\nu^*$ as an algebraic function of
the data $\mu$ is bounded above
by the polar degree $\delta_{r-1}$~in~(\ref{LucaForm}).
\end{proposition}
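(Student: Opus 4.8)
\textbf{Proof sketch of Proposition~\ref{prop:upperbound}.} The plan is to re-examine the proof of Theorem~\ref{thm:algdeg} while tracking genericity, using the fixed conormal variety $CV(\mathcal{M})$ as the carrier of the bound. Put $r = \dim L_F$, so that Theorem~\ref{thm:algdeg} asserts that a \emph{generic} instance of the constrained problem (\ref{eq:generalopt2}) with an $r$-dimensional linear space has exactly $\delta_{r-1}$ complex critical points. The Wasserstein instance (\ref{eq:generalopt}) is the special case $(\ell,L) = (\ell_F,\, \mu + L_F)$, in which the direction $L_F$ and the functional $\ell_F$ are fixed and only the translation vector $\mu$ varies. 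Since $\nu^*(\mu)$ is, by definition, the global minimizer among the complex critical points of (\ref{eq:generalopt}), it is one branch of the algebraic correspondence cut out by the critical (Lagrange) equations; hence the degree of $\mu \mapsto \nu^*(\mu)$ as an algebraic function is at most the number of complex critical points of (\ref{eq:generalopt}) for generic $\mu$. So it suffices to bound that number by $\delta_{r-1}$.

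Next I would realize these critical points as a linear section of the conormal variety, exactly as in the proof of Theorem~\ref{thm:algdeg} (cf.\ \cite[\S5]{DHOST}). Complexifying $\mathcal{M}$, passing to its smooth projective closure in $\mathbb{P}^m$, and writing out the Lagrange conditions for minimizing $\ell_F$ on $(\mu + L_F)\cap\mathcal{M}$, one sees that each critical point $\nu$ gives a pair $(\nu,h)\in CV(\mathcal{M})$ in which $\nu$ lies on the $r$-dimensional projective linear space $\overline{\mu + L_F}$ and $h$ lies on a projective linear space $\Lambda = \Lambda(\ell_F,L_F)\subset(\mathbb{P}^m)^\vee$ of dimension $m+1-r$, which depends only on $\ell_F$ and $L_F$ and not on $\mu$ (this is the linear space denoted $L'_{m+1-r}$ in the proof of Theorem~\ref{thm:algdeg}, here taken in a special rather than a generic position). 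Thus, for generic $\mu$, the complex critical points of (\ref{eq:generalopt}) form a subset of $\bigl(\overline{\mu + L_F}\times\Lambda\bigr)\cap CV(\mathcal{M})$.

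Now $CV(\mathcal{M})$ is a \emph{fixed} irreducible subvariety of $\mathbb{P}^m\times(\mathbb{P}^m)^\vee$ of dimension $m-1$, whose cohomology class is (\ref{eq:cohomclass}), and $\overline{\mu + L_F}\times\Lambda$ is a linear subspace of the complementary codimension $m-1$ whose two factors have dimensions $r$ and $m+1-r$. For a \emph{generic} linear space of this product shape the intersection consists of $\delta_{r-1}$ reduced points, by Theorem~\ref{thm:algdeg} and its proof. For our special one, a multihomogeneous B\'ezout inequality — the number of isolated points of a linear section of a fixed projective variety never exceeds the corresponding coefficient of its multidegree — gives that the number of isolated intersection points is at most $\delta_{r-1}$; since $\nu^*$ is assumed to be a bona fide algebraic function of $\mu$, for generic $\mu$ it is an isolated critical point, so the bound applies. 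The same argument makes clear why one gets only an inequality: $(\ell_F,L_F)$ is a very special pair, so the linear section can degenerate and lose points, whereas Theorem~\ref{thm:algdeg} gives equality only for generic data.

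I expect the main obstacle to be the second step: carrying out the affine-to-projective dictionary for the Lagrange system cleanly, so that critical points correspond injectively to points of the stated product-linear section of $CV(\mathcal{M})$ without gaining spurious solutions on the hyperplane at infinity, and in particular verifying that the hyperplanes $h$ arising in the Lagrange conditions really sweep out a projective linear space $\Lambda$ of exactly dimension $m+1-r$. That dimension count is what forces the relevant coefficient of (\ref{eq:cohomclass}) to be $\delta_{r-1}$ — the precise polar degree named in (\ref{LucaForm}) for this face $F$ — rather than some other coefficient, so it is the heart of the matter.
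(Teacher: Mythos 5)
Your overall strategy is different in mechanism from the paper's, even though both rest on Theorem~\ref{thm:algdeg}. The paper's proof is a direct specialization/semicontinuity argument in the parameters $(\ell,L)$: the graph of $\mu\mapsto\nu^*(\mu)$ is an irreducible variety whose degree over $\mu$ equals $\delta_{r-1}$ when the coordinates of $(\ell,L)$ are independent transcendentals, and this degree can only drop when $(\ell,L)$ specializes to the real pair $(\ell_F,L_F)$ (the same argument used tacitly in \cite[\S 2]{DHOST} and \cite[\S 3]{NRS}). You instead keep $(\ell_F,L_F)$ fixed and bound the number of complex critical points by realizing them inside the special linear section $(\overline{\mu+L_F}\times\Lambda)\cap CV(\mathcal{M})$ and invoking a refined-B\'ezout/conservation-of-number bound on isolated points of a special section. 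Most of this can be made to work: your dimension count is right (the hyperplanes occurring in the Lagrange conditions have linear part in ${\rm span}(\ell_F)+L_F^{\perp}$, which together with the free constant term gives a projective linear space $\Lambda\subset(\mathbb{P}^m)^\vee$ of dimension $m+1-r$, independent of $\mu$), the relevant multidegree coefficient is indeed $\delta_{r-1}$, and the bound on isolated points of a product-linear section of the irreducible $(m-1)$-dimensional variety $CV(\mathcal{M})$ holds; it can be proved by an incidence-variety/Stein factorization argument over the pair of Grassmannians. What your route buys is an explicit geometric carrier of the inequality; what the paper's route buys is brevity and immunity to degeneracies of the special section.

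There is, however, one genuine leap in your sketch: you pass from ``$\nu^*$ is an isolated critical point for generic $\mu$'' to ``$(\nu^*,h)$ is an isolated point of $(\overline{\mu+L_F}\times\Lambda)\cap CV(\mathcal{M})$,'' and these are not the same. Because $(\ell_F,L_F)$ is in special position, the section can acquire excess components through $(\nu^*,h)$ that do not consist of critical points at all: for instance, pairs $(\nu',h')\in CV(\mathcal{M})$ with the linear part of $h'$ lying in $L_F^{\perp}$ (your ``spurious'' branch, which is genuinely part of $\Lambda$), or a positive-dimensional family of tangent hyperplanes at $\nu^*$ inside $\Lambda$ when $N_{\nu^*}\mathcal{M}$ meets ${\rm span}(\ell_F)+L_F^{\perp}$ excessively. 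If such a component passes through $(\nu^*,h)$, the isolated-point bound no longer applies to that critical point, and your chain of inequalities breaks. To repair this you would either have to rule out such excess components for generic $\mu$ (which is model- and face-dependent), enlarge the family by letting $\mu$ vary and argue on the total incidence over $\mu$, or simply fall back on the paper's argument: degenerate from transcendental $(\ell,L)$, where Theorem~\ref{thm:algdeg} gives exactly $\delta_{r-1}$, and use that the degree of the irreducible solution correspondence over $\mu$ is upper semicontinuous under specialization of the remaining parameters.
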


\begin{proof}
This follows from Theorem~\ref{thm:algdeg}. The upper bound relies on
general principles of algebraic geometry.
Namely, the graph of the map $\mu \mapsto \nu^*(\mu)$ is an irreducible
variety, and we study its degree
over $\mu$.  The map depends on the parameters $(\ell,L)$.
When the coordinates of $L$ and $\ell$ are independent transcendentals
then the algebraic degree is the polar degree $\delta_{r-1}$.
That algebraic degree can only go down when these coordinates take on special 
values in the real numbers.
This semi-continuity argument is valid for most polynomial optimization problems. 
It is~used tacitly for
Euclidean distance optimization in \cite[\S 2]{DHOST} and for
semidefinite programming in \cite[\S 3]{NRS}.
\end{proof}

We now study the drop in algebraic degree for the four models 
 in Example~\ref{ex: running_example}.
In the language of algebraic geometry, our four models are
the Segre threefold $ \mathbb{P}^1 \times \mathbb{P}^1 \times \mathbb{P}^1$
in $\mathbb{P}^7$, the variety $\mathbb{P}^2 \times \mathbb{P}^2 $
of rank one $3 \times 3$ matrices in $\mathbb{P}^8$,
 the rational normal curve $\mathbb{P}^1$ in 
$\mathbb{P}^6 = \mathbb{P}({\rm Sym}_6(\mathbb{R}^2))$, and the
Segre-Veronese surface $\mathbb{P}^1 \times \mathbb{P}^1$ in 
$\mathbb{P}^5 = \mathbb{P}({\rm Sym}_2(\mathbb{R}^2) \times 
{\rm Sym}_1(\mathbb{R}^2) )$. The underlying finite metrics $d$ are specified in the
fourth column of Table \ref{table_experiments_combinatorics}.
The fifth column records the combinatorial complexity of our optimization problem,
while the algebraic complexity is recorded in Table \ref{table_experiments_algebra}.

\begin{table}[h!]
	\begin{center}
		\vspace{-0in}
		\renewcommand*{\arraystretch}{1.2}
		\begin{tabular}{ | r | l | l | l | }
			\hline
			$\mathcal{M}$ & Polar degrees  & Maximal degree & Average degree\\ \hline
			$(2,2,2)$ & $(0,0,0,6,12,12,4) $ & $(0,0,0,4,12,6,0)$ & $(0,0,0,2.138,6.382,3.8,0)$   \\
			$(3,3)$ & $(0,0,0,6,12,12,6,3)$ & $(0,0,0,2,8,6,6,0)$ & $(0,0,0,1.093,3.100,4.471,6.0,0)$\\
			$(2_6) $ & $(0,0,0,0,6,10)$ & $(0,0,0,0,6,5)$& $(0,0,0,0,6,5)$\\ 
			$(2_2,2)$ & $(0,0,4,6,4)$ & $(0,0,3,5,2)$ & $(0,0,2.293,3.822,2.0)$\\ \hline
		\end{tabular}
		\vspace{-0.2in}
	\end{center}
	\caption{The algebraic degrees of the problem (\ref{eq:ourproblem})
		for the four models in \Cref{ex: running_example}.}
	\label{table_experiments_algebra}
\end{table}	

\smallskip

The second column in  Table \ref{table_experiments_algebra} gives
the vector $(\delta_0,\delta_1,\ldots,\delta_{n-2})$
of polar degrees for the model $\mathcal{M}$ under consideration.
The third and fourth column are results of our computations.
For each model, we take $1000$ uniform samples $\mu $ with
rational coordinates from the simplex $\Delta_{n-1}$,
and we solve the optimization problem (\ref{eq:ourproblem}) using the
methods described in Section \ref{sec6}. The output is an exact
representation of the optimal solution $\nu^*$. This includes the optimal face $F$ 
that specifies $\nu^*$, along with its maximal ideal in the polynomial ring over
  the field $\mathbb{Q}$
of rational numbers. The algebraic degree of the optimal solution $\nu^*$ is 
computed as the
number of complex zeros of that maximal ideal. This number
is bounded above by the polar degree, as seen in Proposition \ref{prop:upperbound}.

The third and fourth column in  Table \ref{table_experiments_algebra}
reports on the algebraic degree of $\nu^*$ in our experiments.
It shows the maximum and the average of the degrees 
found in the $1000$ computations. That maximum is
bounded above by the polar degree. Equality holds in some cases.
For example,  for the $3$-bit model $(2,2,2)$ we have $\delta_3 = 6$, 
corresponding to $P_d^*$ touching $\mathcal{M}$ at a $3$-face $F$,
but the maximum degree we observed was $4$, with an average
degree of $2.138$. For $4$-faces $F$, we have
$\delta_4 = 12$, and this was indeed attained in some of our experiments.
The average was $6.382$.

\section{Algorithms and Experiments}            \label{sec6}

We now report on computational experiments. These are carried out in three stages:
(1) combinatorial preprocessing, (2) numerical optimization, and (3) algebraic postprocessing.
Our object of interest is a model $\mathcal{M}$ in the simplex $\Delta_{n-1}$,
typically one of the independence models $(({m_1})_{d_1},\dots , ({m_k})_{d_k})$
where $n = \prod_{i=1}^k \binom{m_i+d_i-1}{d_i}$.
The state space $[n]$ is given the structure of a metric space
by a symmetric $n \times n$ matrix $d = (d_{ij})$. This matrix defines
the Lipschitz polytope $P_d$ and its dual, the Wasserstein ball $P_d^*$.
Our first algorithm computes these combinatorial objects.

\vspace{10pt}
\begin{algorithm}[H]\label{algorithm1}
	\KwIn{ An $n\times n$ symmetric matrix $d=(d_{ij})$.}
	\KwOut{A description of all facets $F$ of the Wasserstein ball $P_d^*$.}
	\KwSty{Step 1:} 
	From the description in Section \ref{sec4}, 
	find all vertices of the Lipschitz polytope~$P_d$. These vertices are the inner normal vectors $\ell_F$ to the facets $F$ of $P_d^*$.
	Store them. \\
	\KwSty{Step 2: } Determine an inequality description of the cone $C_F$ over
	each facet $F$. \\
	\KwSty{Return:} The list of pairs $(\ell_F,C_F)$,
	 one for each vertex of the Lipschitz polytope $P_d$.
		\caption{Combinatorial preprocessing}
	\label{alg: algorithm1}
\end{algorithm}
\vspace{10pt}

In our experiments, we use the software {\tt Polymake} \cite{polymake} for
running Algorithm \ref{alg: algorithm1}. Note that Step 1 is 
a challenging calculation. It remains an open problem to characterize combinatorially the incidence structure of other Lipschitz polytopes in the same spirit as \Cref{lem:rhombic}. We carried out this preprocessing
for a range of smaller models including those four featured in Example~\ref{ex: running_example}.

Our next algorithm solves the optimization problem in (\ref{eq:ourproblem}).
This is done by examining each facet $F$ of the Wasserstein ball.
The problem is precisely that in (\ref{eq:generalopt}) but with the linear space
$L_F$ now replaced by the convex cone $C_F$ that is spanned by $F$.

\vspace{5pt}
\begin{algorithm}[H]\label{alg: algorithm2}
	\KwIn{Model $\mathcal{M}$ and a point $\mu$ in  the simplex $ \Delta_{n-1}$;
		complete output from Algorithm~\ref{alg: algorithm1}.	}
	\KwOut{The optimal solution $\nu^*$ in (\ref{eq:ourproblem}) along with its type $G$.}
	\KwSty{Step 1: } \For{\normalfont{each facet $F$ of the Wasserstein ball $P_d^*$} }{
		\KwSty{Step 1.1: } 	
		Apply global optimization methods
		to identify a solution $\nu^* \in \mathcal{M}$ of 
		\begin{equation*}
		\label{eq:generalopt3}
		\hbox{{\rm minimize} $\,\ell_F = \ell_F(\nu)\,$ subject to $\,\nu \in (\mu + C_F) \cap \mathcal{M}$.} 
		\end{equation*} \\
		\KwSty{Step 1.2: } 	Identify the unique face $G$ of $F$ whose span has $\nu^*$ in its relative interior. \\
		\KwSty{Step 1.3: } Find a basis of vectors $e_i-e_j \in C_G$ for 
		the linear space $L_G$ spanned by~$G$. \\
		\KwSty{Step 1.4: } Store the optimal solution $\nu^*$ and 
		a basis for the linear subspace $L_G$ of $\mathbb{R}^n$.}
	\KwSty{Step 2: } Among candidate solutions found in Step 1, identify
	the solution $\nu^*$ for which the Wasserstein distance $W_d(\mu,\nu^*)$ 
	to the  data point $\mu$ is smallest. Record its type~$G$.\\
	\KwSty{Return:} The optimal solution $\nu^*$, its associated linear space $L_G$, and
	the facet normal $\ell_G$.
	\caption{Numerical optimization}
	\label{alg: algorithm3}
\end{algorithm}
\vspace{5pt}

We use the software {\tt SCIP} \cite{SCIP} for running Algorithm~\ref{alg: algorithm2}. {\tt SCIP} employs sophisticated branch-and-cut strategies to solve constrained polynomial optimization problems via LP relaxation. We make use of the Python interface in {\tt SCIP} to implement Algorithm~\ref{alg: algorithm3} in a single environment. 

The virtue of Algorithm~\ref{alg: algorithm3} is that it is guaranteed to find
the global optimum for our problem~(\ref{eq:ourproblem}). Moreover, it furnishes
an identification of the combinatorial type. This serves as the input
to the symbolic computation in Algorithm~\ref{alg: algorithm4}.
The drawback of Algorithm~\ref{alg: algorithm3} is that it
requires reprocessing that is prohibitive for
larger models. We will return to this point later.

\vspace{5pt}
\begin{algorithm}[H]\label{algorithm3}
	\KwIn{ The optimal solution $(\nu^*,G)$ to (\ref{eq:ourproblem}) in the form
	found by Algorithm~\ref{alg: algorithm3}.}
	\KwOut{The maximal ideal in the polynomial ring  
	$\mathbb{Q}[\nu_1,\ldots,\nu_n]$ which has the zero $\nu^*$.}
	\KwSty{Step 1:} Use Lagrange multipliers to give polynomial equations that
	characterize the critical points of the linear function $\,\ell_F\,$ on the subvariety
	$\,(\mu+L_G) \cap \mathcal{M}\,$ in $\,\mathbb{R}^n$. \\
	\KwSty{Step 2: } Eliminate all variables representing Lagrange multipliers from the ideal
	in~Step~1. \\
\KwSty{Step 3: } The ideal from Step 2 is in $\mathbb{Q}[\nu_1,\ldots,\nu_n]$. 
If this ideal is maximal then call it $M$. \\
	\KwSty{Step 4: }  If not, remove extraneous primary components to get 
	the maximal ideal $M$ of $\nu^*$. \\
\KwSty{Step 5: }  Determine the degree of $\nu^*$, which is the
 dimension of $\mathbb{Q}[\nu_1,\ldots,\nu_n]/M$
 over $\mathbb{Q}$. \\
		\KwSty{Return:} Output generators for the ideal $M$ along with the degree
		found in Step 5.
				\caption{Algebraic postprocessing}
		\label{alg: algorithm4}
\end{algorithm}
\vspace{3pt}

We run
Algorithm \ref{alg: algorithm4} with the computer algebra system {\tt Macaulay2} \cite{M2}.
 Steps 2 and 4 are the result of standard Gr\"obner basis calculations.
We illustrate the entire pipeline with an example.

\begin{example}
The following matrices are points in the probability simplex $\Delta_8$
for the model~$(3,3)$:
$$ 
\mu \,=\, 
\begin{small} \frac{1}{100} \begin{bmatrix}
                                             2  &    3 &     5 \\
                                             7  &  11  &  13 \\
                                             17 &   19 &   23 \end{bmatrix} \end{small}
                                             \, ,
\quad                                             
\nu^* \,=\, \begin{small} \frac{1}{4600} \begin{bmatrix}
124    & 152  &     184 \\
403   & 494  &   598 \\
713  &  874  &  1058
\end{bmatrix}            \end{small}   \, ,                              
\quad
\hat \nu \,=\, \begin{small} \frac{1}{10000} \begin{bmatrix}
260   &  330 &    410 \\
806  &  1023 &   1271 \\
1534 &   1947 &   2419
\end{bmatrix}. \end{small}
$$                                             
Algorithm~\ref{alg: algorithm2} computes
the optimal solution $\nu^*$ along with its type $G$. This 
  face of the $8$-dimensional
Wasserstein ball $P_d^*$ is the tetrahedron
$ G\,\,=\,\,\conv\{e_1-e_2,e_2-e_3,e_4-e_5,e_4-e_7\}$.
The four vertices span the linear space $L_G$.
A facet $F$ containing $G$ is defined by the normal vector 
$\ell_F=(2,1,0,1,0,1,0,-1,0)$.
While the corresponding polar degree
$\delta_3$ equals $6$, Table~\ref{table_experiments_algebra} shows
that all solutions observed for this type have
algebraic degree $1$ or $2$, with average  $1.093$.
Indeed, the entries of the matrix $\nu^*$ are rational numbers, so 
 the algebraic degree is $1$. 
The optimal Wasserstein distance is the rational number
$\,W_d(\mu,\nu^*) =\langle \ell_F,\mu-\nu^*\rangle =  159/4600 = 0.034565217.... $

The rightmost matrix $\hat \nu$ also has rank one.
It lies in the model, just like $\nu^*$.
This matrix is the maximum likelihood estimate for $\mu$, so it minimizes
the Kullback-Leibler distance to the model.
Its Wasserstein distance to the data $\mu$ equals
$\, W_d(\mu,\hat \nu) = 32/625 = 0.0512$. In the experiments recorded in \Cref{table_experiments_new},
 the type $G$ of the solution $\nu^*$ has dimension $3$  for the $65.7\%$ of the samples~$\mu$.

We now consider another data point, obtained by permuting the coordinates  used above:
$$ 
\mu \,=\, \frac{1}{100} \begin{bmatrix}
                                             11  &    2 &     5 \\
                                             3  &  13  &     7 \\
                                             17 &   19 &   23 \end{bmatrix}\, ,
\quad                                             
\nu^* \, = \, \begin{bmatrix}
\nu_1 & \nu_2 & \nu_3 \\
\nu_4 & \nu_5 & \nu_6 \\
\nu_7 & \nu_8 & \nu_9 \end{bmatrix} \, = \,
\begin{bmatrix}
0.037183 &  0.041558  & 0.050303 \\
0.080956 &  0.090480 &  0.109525 \\
0.17 & 0.19 &                   0.229995 
\end{bmatrix}.
$$
Here Algorithm~\ref{alg: algorithm2} identifies the
solution $\nu^*$ above, together with the $4$-dimensional type 
\[
	G \,\,=\,\, \conv\{e_2-e_1,e_3-e_2,e_4-e_1,e_6-e_5,e_6-e_9\}.
\]
The optimal value, $W_d(\mu,\nu^*) = 0.112645$,
has algebraic degree $4$, so it can be written
in radicals over $\mathbb{Q}$.
The relevant polar degree is $\delta_4 = 12$. The largest observed
degree is $8$, as seen in Table~\ref{table_experiments_algebra}.
The exact representation of the solution $\nu^*$ is the 
maximal ideal in $\mathbb{Q}[\nu_1,\ldots,\nu_9]$ generated by
\small{$$ \begin{matrix}
5631250000 \nu_1^4-18245250000 \nu_1^3-3922376250 \nu_1^2-121856850 \nu_1+9002061,\, \\
17 \nu_2-19 \nu_1,\quad 100 \nu_7-17,\quad
100 \nu_8-19,\, \\
10489919785 \nu_3\,+\,954632025000 \nu_1^3-3208398380500 \nu_1^2-261822911570 \nu_1+11757750732
, \, \\
12341082100 \nu_4 \,-\,1123096500000 \nu_1^3+3774586330000 \nu_1^2+334161011000 \nu_1
-16424275161 ,\, \\
209798395700 \nu_5\,-\,21338833500000 \nu_1^3+71717140270000 \nu_1^2+6349059209000 \nu_1
-312061228059,\, \\
104899197850 \nu_6\,+\,23173044250000 \nu_1^3-77993197677500 \nu_1^2-6429496583150 \nu_1
+285451958883,\, \\
104899197850 \nu_9\,-\,12503627500000 \nu_1^3+42134627542500 \nu_1^2+3254966978650 \nu_1
-174527999929 .\\
\end{matrix}
$$}
This Gr\"obner basis in triangular form is the output of Algorithm~\ref{alg: algorithm4}.
Two entries of $\nu^*$ are rational.
\end{example}

\vspace{5pt}
\begin{table}[h!]
	\begin{center}
		\vspace{-0.14in}
		\begin{tabular}{ | r | l | l |  l |  l | }
			\hline
			$\mathcal{M}$ & $d$ & $\dim(\mathcal{M})$ & $\#$ facets of $B$ & avg $\#$ feasible probs. \\ \hline 
			$(2,2)$ & $L_0$ & 2 & $6$ & 5.000 \\    \hline
			$(2,2,2)$ & $L_0$ & 3 & $38 $& 23.734  \\ \hline
			$(2,3)$ &  $L_0$ & 3 & $54$ & 30.000 \\ \hline
			$(2,3)$ & $L_1$ & 3 & $18$ & 12.645  \\ \hline
			$(3,3)$ &  $L_0$ & 4 & $534$ & 162.307 \\ \hline
			$(3,3)$ & $L_1$ & 4 & $82$ & 40.626 \\ \hline
			$(2,4)$ &  $L_0$ & 4 & $282$ & 110.165 \\ \hline
			$(2,4)$ & $L_1$ & 4 & $54$ & 32.223 \\ \hline
			$(2_3)$ & $L_1$ & 1 & $8$ & 4.000 \\ \hline
			$(2_3)$& di & 1 & $14$ & 5.182\\ \hline
			$(2_2,2)$ & $L_1$ & 2 & $18$ & 8.604\\ \hline
			$(2_2,2)$  & di & 2 & $62$ & 24.618 \\ \hline
			$(3_2)$ & di & 2 & $62$ & 24.365 \\ \hline
			$(2_4)$ & $L_1$ & 1 & $16$ & 5.000 \\ \hline
			$(2_4)$  & di & 1 & $30$ & 8.690 \\ \hline
		\end{tabular}
	\end{center}
	\caption{The number of feasible optimization problems for a uniform sample of 1000 points.}
	\label{table_experiments_new_first}
\end{table}

Using our three algorithms, we ran experiments
 on various models with $1000$ uniformly sampled data points $\mu$.
 The first question we addressed:
  \emph{For a given data point $\mu$, how many~of the polynomial optimization problems in Step 1.1 of Algorithm~\ref{alg: algorithm3} are feasible? }
In geometric~terms: for how many facets $F$ of the ball $P_d^*$
does the cone $\mu + C_F$ intersect the model?
A bound for this number could be used to reduce the number of optimization problems in Step 1 of Algorithm~\ref{alg: algorithm2}.  We report the average number of feasible problems for several models and metrics in \Cref{table_experiments_new_first}. We observe that
different metrics for the same model can produce quantitatively different results.

Our second question is: \emph{What is the distribution of the dimension of the type $G$
for $\mu\in\Delta_{n-1}$?} The output of Algorithm~\ref{alg: algorithm2} contains that information. 
We display it in \Cref{table_experiments_new}  for the  same models and metrics
 as in Table \ref{table_experiments_new_first}.
 For some models unexpected intersections happened. For example, the second row shows
 that for 1 of the 1000 random points the optimal type was a $2$-dimensional face, even though generically a $3$-dimensional linear space does not intersect a model with codimension~$4$. This is due to numerical imprecision.
In \Cref{thm: 3bits}, we studied the $2$-bit model, and we 
 saw that the intersection of the Wasserstein ball and the model is either an edge or a vertex. The first row of \Cref{table_experiments_new} shows that, on a uniform sample of 1000 points in the
tetrahedron $\Delta_3$, in
  roughly  $31\%$ of the cases the intersection lies in the interior of an edge. Looking at \Cref{fig:partition12}, this indicates the fraction of volume enclosed between the red surfaces and the edges of $\Delta_3$ they cover.\\

\begin{table}[h!]
	\footnotesize
	\begin{center}
		\vspace{-0.14in}
		\begin{tabular}{ | r | l |  l |  l |  l | l | l | l | l | l |}
			\cline{4-10}  
			\multicolumn{3}{c|}{}&\multicolumn{7}{c| }{$\%$ of opt. solutions of $\dim(\text{type})=i$} \\ \hline
			$\mathcal{M}$ & $d$ & $f$-vector & 0 & 1 & 2 & 3 & 4 & 5 & 6 \\ \hline 
			$(2,2)$ & $L_0$ & $(8,12,6)$ & 68.6 & 31.4 & 0 & - & - & - & - \\    \hline
			$\! (2,2,2)$ & $L_0$ & $(24, 192, 652, 1062, 848, 306, 38) $& 0 & 0 & 0.1 & 70.9 & 27.5 & 1.5 & 0 \\ \hline
			$(2,3)$ &  $L_0$ & $(18,96,200,174,54)$ & 0 & 64.1 & 18.7 & 17.2 & 0 & - & - \\ \hline
			$(2,3)$ & $L_1$ & $(14,60,102,72,18)$ & 0 & 76.7 & 17.4 & 5.9 & 0 & - & - \\ \hline
			$(3,3)$ &  $L_0$ & (36{,}468{,}2730{,}8010{,}12468{,}10200{,}3978{,}534)  & 0 & 0 & 0.1 & 58.3 & 28.2 & 4.6 & 8.8 \\ \hline
			$(3,3)$ & $L_1$ & $(24, 216, 960, 2298, 3048, 2172, 736, 82)$ & 0 & 0 & 0 & 65.7 & 27.8 & 5.1 & 1.4  \\ \hline
			$(2,4)$ &  $L_0$ & $(32, 336, 1464, 3042, 3168, 1566, 282)$ & 0  & 0.1 & 55.1 & 14.6 & 25.8 & 4.4 & 0\\ \hline
			$(2,4)$ & $L_1$ & $(20, 144, 486, 846, 774, 342, 54)$ & 0 & 0 & 75.3 & 16.5 & 8.2  & 0 & 0 \\ \hline
			$(2_3)$& $L_1$ & $(6,12,8)$ & 0 & 98.3 & 1.7 & - & - & - & -  \\ \hline
			$(2_3)$& di & $(12,24,14)$ & 0.2 & 96.7 & 3.1 & - & - & - & - \\ \hline
			$(2_2,2)$ & $L_1$ & (14,60,102,72,18) & 0 & 0 & 67.6 & 27.5 & 4.9 & - & -\\ \hline
			$(2_2,2)$  & di & $(30,120,210,180,62)$ & 0 & 0.2 & 81.9 & 16.8 & 1.1 & - & - \\ \hline
			$(3_2)$ & di & $(30,120,210,180,62)$ & 0 & 0.2 & 83.1 & 16.0 & 0.7 & - & - \\ \hline
			$(2_4)$ & $L_1$ & $(8,24,32,16)$ & 0 & 0.1 & 98.3 & 1.6 & - & - & -  \\ \hline
			$(2_4)$  & di & $(20,60,70,30)$ & 0 & 0 & 96.9 & 3.1 & - & - & - \\ \hline
		\end{tabular}
	\end{center}
	\caption{Distribution of types among optimal solutions for a
	 uniform sample of 1000 points.}
	\label{table_experiments_new}
\end{table}

In this article we studied the Wasserstein distance problem for discrete statistical models, with emphasis on the combinatorics, algebra and geometry of independence models. 
The theoretical results we obtained here constitute the foundation for a class of iterative algorithms that can be applied to larger models. 
We shall develop such algorithms and their implementation in a forthcoming project, with a view towards concrete applications of our methods in data science. 

\section*{Acknowledgment}

Asgar Jamneshan was supported by DFG-research fellowship AJ 2512/3-1. Guido Mont\'ufar
 acknowledges support from the European Research Council (ERC) under the European Union’s Horizon 2020 research and innovation programme (grant no 757983). 
We thank Felipe Serrano for helping us with the software {\tt SCIP}. 

\bibliographystyle{elsarticle-harv}
\bibliography{bibliography}

%
\end{document}